 \newcommand{\dist}{\mathrm{d} }
 \newcommand\calP{{\mathcal P}}
 \newtheorem{thm}{Theorem}[section]
 \newtheorem{cor}[thm]{Corollary}
 \newtheorem{lem}[thm]{Lemma}
 \newtheorem{prop}[thm]{Proposition}
 \newtheorem{prob}[thm]{Problem}
 \theoremstyle{definition}
 \newtheorem{hyp}[]{Hypothesis}
 \newtheorem{rem}[thm]{Remark}
 \newcommand{\diam}{\mathrm{diam}}
 \newcommand{\girth}{\mathrm{girth} }
 \newcommand{\Ga}{\Gamma}
 \newcommand{\Si}{\Sigma}
 \newcommand{\Aut}{\mathrm{Aut}\,}
 \newcommand{\PGaL}{\mathrm{P\Gamma L}}
 \newcommand{\PGL}{\mathrm{PGL}}
 \newcommand{\PSU}{\mathrm{PSU}}
 \newcommand{\HoSi}{\mathrm{HoSi}}
 \newcommand{\rmP}{\mathrm{P}}
 \def\S{\textsf{S}}
 \def\ssig{\S(\Sigma)}
\journal{Discrete Mathematics}
\begin{document}

\begin{frontmatter}



\title{Symmetry properties of subdivision graphs}

 \author[a1]{Ashraf Daneshkhah\corref{cor1}}
 \ead{adanesh@basu.ac.ir}
 \author[a2]{Alice Devillers} \ead{alice.devillers@uwa.edu.au}
 \author[a2]{Cheryl E. Praeger} \ead{cheryl.praeger@uwa.edu.au}
 \address[a1]{Department of Mathematics, Faculty of Science, Bu-Ali Sina University, Iran.}
 \address[a2]{Centre for the Mathematics of Symmetry and Computation, School of Mathematics and Statistics, University of Western Australia, 
 Perth, Australia.}
 \cortext[cor1]{Corresponding author}
 


\begin{abstract}
The subdivision graph  $\S(\Sigma)$ of a graph  $\Sigma$ is obtained from $\Sigma$ by `adding a vertex' in the middle of every edge of $\Si$. Various symmetry properties of $\S(\Sigma)$ are studied. We prove that, for a connected graph $\Sigma$, $\S(\Sigma)$ is locally $s$-arc transitive if and only if $\Sigma$ is $\lceil\frac{s+1}{2}\rceil$-arc transitive. The diameter of $\S(\Sigma)$ is $2d+\delta$, where $\Sigma$ has diameter $d$ and $0\leqslant \delta\leqslant 2$, and local $s$-distance transitivity of $\S(\Sigma)$ is defined for $1\leqslant s\leqslant 2d+\delta$.  In the general case where $s\leqslant 2d-1$ we prove that
$\S(\Sigma)$ is locally $s$-distance transitive if and only if $\Sigma$ is $\lceil\frac{s+1}{2}\rceil$-arc transitive. For the remaining values of $s$, namely $2d\leqslant s\leqslant 2d+\delta$, we classify the graphs $\Sigma$ for which $\S(\Sigma)$ is locally $s$-distance transitive in the cases, $s\leqslant 5$ and $s\geqslant 15+\delta$. The cases $\max\{2d, 6\}\leqslant s\leqslant \min\{2d+\delta, 14+\delta\}$ remain open.
\end{abstract}

\begin{keyword}
Subdivision graph; locally distance transitive graph; locally $s$-arc transitive graph; line graph

\end{keyword}

\end{frontmatter}



\section{Introduction}

All graphs in this paper are simple and undirected, that is to say, a graph $\Sigma$ consists of a  vertex-set $V(\Sigma)$ and a subset  $E(\Sigma)$ of unordered pairs from $V(\Sigma)$, called edges.
The {\it subdivision graph} $\S(\Si)$ of a graph $\Si$  is defined as the graph with vertex set $V(\Sigma) \cup E(\Sigma)$ and edge set $
\{\{x, e\}| \ x\in V(\Sigma), \  e\in E(\Sigma), \ x\in e\}.
$
Informally, $\ssig$ is the graph obtained from $\Sigma$ by `adding a vertex' in the middle of each edge of $\Si$. The purpose of this paper is to elucidate connections between various symmetry properties of $\Sigma$ and of its subdivision graph $\S(\Sigma)$, in particular local $s$-arc-transitivity, and local $s$-distance transitivity (defined in Section \ref{sec-pre}).
These properties are generalisations and natural analogues of graph properties extensively studied in the literature, namely $s$-arc transitivity introduced by W.T. Tutte~\cite{Tutte47,Tutte59} and
distance transitivity introduced by D.G. Higman \cite{Higman67}, and  studied further by N. Biggs and  many others (see \cite{Biggs74,Cameron83,Praeger87,van07, Weiss1981}). Moreover the families of locally $s$-arc transitive graphs and locally $s$-distance transitive graphs were analysed in \cite{GLP} and \cite{DGLP} respectively.

In Section~\ref{sec-pre} we give basic graph theoretic concepts and notation, including definitions of the transitivity properties we study.
Some basic properties of $\ssig$ are given in Section~\ref{sec-local}. Firstly $\ssig$ is  bipartite and is connected if $\Si$ is connected, and the graph $\Sigma$ can be reconstructed from $\ssig$ (in the exceptional case where $\Sigma$ is a cycle, reconstruction of $\Si$ from $\ssig$ is up to isomorphism only). Cycles arise as exceptions in other ways also. The automorphism group  $\Aut (\Si)$ acts on $V(\Si)$ and $E(\Si)$ and preserves the incidence of $\S(\Si)$ giving a natural embedding  $\Aut(\Si)\leqslant \Aut (\S(\Si))$. With the exception of cycles, this embedding is an isomorphism. For cycles $\Si=C_{n}$, $\ssig=C_{2n}$ and $\Aut(\ssig)=D_{4n}=(\Aut(\Sigma)).2$.
Note that the subdivision graphs of cycles are vertex transitive, while
for all other graphs $\Si$, $\Aut(\S(\Si))=\Aut(\Sigma)$ fixes $V(\Si)$ and $E(\Si)$ setwise.

 In Section ~\ref{sec:proofs1}, we prove a decisive relationship between the levels of arc-transitivity of $\Si$ and $\ssig$.

\begin{thm}\label{thm:bary}
Let $\Sigma$ be a connected graph, $s$ a positive integer, and $G\leqslant\Aut(\Sigma)$.
Then $\S(\Si)$ is locally $(G, s)$-arc  transitive if and only if
$\Sigma$ is $(G, \lceil\frac{s+1}{2}\rceil)$-arc transitive.
\end{thm}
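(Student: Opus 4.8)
The plan is to set up an explicit, $G$-equivariant dictionary between the arcs of $\S(\Si)$ and the arcs of $\Si$, and then read off both implications from standard facts about $(G,m)$-arc transitivity. The starting observation is that $\S(\Si)$ is bipartite with parts $V(\Si)$ (the \emph{original} vertices) and $E(\Si)$ (the \emph{edge} vertices), that every edge vertex has valency $2$, and that $G$, embedded in $\Aut(\S(\Si))$, fixes each part setwise. Consequently every walk in $\S(\Si)$ alternates between the two parts, and at an edge vertex $e=\{x,y\}$ the non-backtracking condition forces the walk to pass straight through from $x$ to $y$ (its only two neighbours), so no genuine choice is made at edge vertices. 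At an original vertex $x_i$ reached along $\dots,e_i,x_i$ with $e_i=\{x_{i-1},x_i\}$, the admissible continuations are the edges $e_{i+1}\ni x_i$ with $e_{i+1}\neq e_i$, that is, the neighbours $x_{i+1}\neq x_{i-1}$ of $x_i$ in $\Si$. Thus the non-backtracking condition in $\S(\Si)$ at original vertices is exactly the non-backtracking condition of the projected walk $x_0,x_1,x_2,\dots$ in $\Si$.

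Writing $t=\lfloor s/2\rfloor$, so that $k:=\lceil\frac{s+1}{2}\rceil=t+1$, I would record the resulting bijections, each clearly equivariant for the $G$-action since it is defined purely from the incidence structure. First, the $s$-arcs of $\S(\Si)$ starting at an original vertex $x$ correspond bijectively to the $\lceil s/2\rceil$-arcs of $\Si$ starting at $x$ (an $s$-arc $x,e_1,x_1,\dots$ maps to $x,x_1,x_2,\dots$; when $s$ is odd the terminal edge vertex $e_{\lceil s/2\rceil}$ supplies the final vertex of the $\Si$-arc). Secondly, the $s$-arcs of $\S(\Si)$ starting at an edge vertex $e$ correspond bijectively to the $k$-arcs of $\Si$ whose first edge is $e$ (the first step out of $e$ fixes an orientation $(y_0,y_1)$ of $e$, and the arc projects to $y_0,y_1,y_2,\dots$). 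Note $\lceil s/2\rceil\leq k$ in all cases, so the edge-vertex condition is the binding one. Under these dictionaries, local $(G,s)$-arc transitivity of $\S(\Si)$ is precisely the conjunction of: (i) for every original vertex $x$, $G_x$ is transitive on the $\lceil s/2\rceil$-arcs of $\Si$ from $x$; and (ii) for every edge $e$, $G_e$ is transitive on the $k$-arcs of $\Si$ with first edge $e$.

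For the reverse implication, assume $\Si$ is $(G,k)$-arc transitive. Then $G$ is vertex-transitive and $G_x$ is transitive on $k$-arcs from $x$; since $k$-arc transitivity descends to $m$-arc transitivity for $m\leq k$ (in a connected graph of minimum valency at least $2$), condition (i) holds. For (ii), given two $k$-arcs with the same first edge $e$, global $(G,k)$-arc transitivity supplies $g\in G$ carrying one to the other, and since $g$ must send first edge to first edge we have $g\in G_e$; so (ii) holds and $\S(\Si)$ is locally $(G,s)$-arc transitive. For the forward implication, assume (i)+(ii). Local $(G,1)$-arc transitivity of $\S(\Si)$ (extracted from $s\geq 1$) means $G$ is transitive on the edges of $\S(\Si)$, i.e. on the flags of $\Si$, which is exactly $(G,1)$-arc transitivity of $\Si$ and hence vertex- and edge-transitivity. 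Combining edge-transitivity with (ii) upgrades the local statement to a global one: any two $k$-arcs of $\Si$ are first aligned on their first edges by an element of $G$, then matched by an element of the corresponding $G_e$. Hence $\Si$ is $(G,k)$-arc transitive.

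The cleanest part is the bijection; the care is in the bookkeeping that makes $\lceil s/2\rceil$ and $k=\lceil\frac{s+1}{2}\rceil$ line up for both parities, and in the degenerate situations where the descent ``$(G,k)$-arc transitive $\Rightarrow$ $(G,m)$-arc transitive'' or ``local $(G,s)\Rightarrow$ local $(G,1)$'' can fail, namely when $\Si$ (equivalently $\S(\Si)$) has vertices of valency $1$. I expect this to be the main obstacle: an edge-transitive graph with a leaf is forced to be $K_2$ or a star $K_{1,n}$, and together with the exceptional behaviour of cycles (where $\Aut(\S(\Si))\neq\Aut(\Si)$) these few families should be checked directly to confirm the equivalence in the boundary cases.
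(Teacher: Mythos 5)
Your proposal is correct in substance, and it runs on the same engine as the paper's proof---the parity-respecting correspondence between arcs of $\ssig$ and arcs of $\Si$---but it packages the harder direction quite differently. The easy direction (from $(G,k)$-arc transitivity of $\Si$, $k=\lceil\frac{s+1}{2}\rceil$, to local transitivity of $\ssig$) is essentially the paper's: your observation that an element carrying one $k$-arc to another with the same first edge must stabilise that edge is exactly how the paper treats $(2k-1)$-arcs of $\ssig$ beginning at an edge vertex. The converse is where the routes genuinely diverge. The paper anchors at a vertex $u_0$ and is forced to split by parity: when $s$ is even, the $s$-arcs of $\ssig$ from $u_0$ capture only $(k-1)$-arcs of $\Si$, so the paper composes $g\in G_{u_0}$ (aligning the initial subarc) with a second element $h$ stabilising the first edge-vertex $e_0'$, plus a separate argument that $h$ fixes $u_0$. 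You anchor at the first edge instead: your condition (ii), that $G_e$ is transitive on the $k$-arcs of $\Si$ with first edge $e$, is read off from $s$-arcs of $\ssig$ starting at edge vertices and involves the same $k$ for both parities; you then globalise via flag-transitivity (extracted from level $1$) by first aligning first edges and then applying (ii). This eliminates the parity split and the two-step stabiliser composition. What the paper's vertex-anchored route buys in exchange is that it works entirely inside $G_{u_0}$, so it directly yields local $(G,k)$-arc transitivity of $\Si$ and invokes Lemma~\ref{lem3}(a) exactly once to globalise.

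One correction to your bookkeeping. Under the paper's definition (Table~\ref{def:localtrans}), local $(G,s)$-arc transitivity already requires transitivity of $G_v$ on $i$-arcs from $v$ for \emph{all} $i\leqslant s$, so the implication ``local $(G,s)\Rightarrow$ local $(G,1)$'' that you flag as a possible failure point is automatic. The price is that your dictionary must also be read level-by-level: the top-level conjunction (i)$+$(ii) alone is \emph{not} equivalent to local $(G,s)$-arc transitivity when $\Si$ has leaves (for $\Si=K_{1,n}$ and $s=2$, your (i) and (ii) hold with $G=S_n$, yet $\ssig$ fails level $1$ at edge vertices). Read level-by-level, your boundary worries largely dissolve: level $1$ at an edge vertex forces $G_e$ to swap the two ends of every edge, so Lemma~\ref{lem3}(a) gives vertex-transitivity and hence regularity of $\Si$, ruling out stars and leaving $K_2$ as the only leafy survivor; and cycles need no special treatment at all, since $G\leqslant\Aut(\Si)$ by hypothesis and the extra automorphisms of $\S(C_n)$ never enter. (The residual case $\Si=K_2$ with $s=2$ is genuinely awkward---$K_2$ has no $2$-arcs, so the nonemptiness clause makes the right-hand side fail while the left-hand side holds---but the paper's own proof glosses over exactly the same point.)
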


This generalises Theorem 3.10 of \cite{GLP}, which proves the result for odd $s$. The concepts of $(G,1)$-arc transitivity and $(G,1)$-distance transitivity are equivalent, as are their local variants. Thus
Theorem \ref{thm:bary} implies the equivalence of  $(G,1)$-arc transitivity of $\Si$ and local $(G,1)$-distance transitivity of $\ssig$.
Moreover, if $s=2$ or $3$, then the local $(G,s)$-distance transitivity of $\ssig$ is equivalent to the $(G,2)$-arc transitivity of $\Si$, see Corollary \ref{basic}.
A similar relationship holds for larger values of $s$ provided  $s\leqslant 2\,\diam(\Si)-1$ (proved in Section~\ref{sec:proofs1}).

\begin{thm}\label{main3}
Let $\Sigma$ be a connected graph and $s$ a positive integer such that $s\leqslant 2\,\diam(\Si)-1$ (so in particular  $\diam(\S(\Si))> s$). Then $\S(\Si)$ is locally $(G,s)$-distance transitive  if and only if $\Sigma$ is
$(G, \lceil \frac{s+1}{2}\rceil)$-arc transitive.
\end{thm}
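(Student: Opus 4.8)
The plan is to deduce Theorem~\ref{main3} from Theorem~\ref{thm:bary} by establishing that, under the hypothesis $s\leqslant 2\,\diam(\Si)-1$, the graph $\ssig$ is locally $(G,s)$-distance transitive if and only if it is locally $(G,s)$-arc transitive. Writing $d=\diam(\Si)$ and $m=\lceil\frac{s+1}{2}\rceil$, once this equivalence is available Theorem~\ref{thm:bary} identifies both conditions with the $(G,m)$-arc transitivity of $\Si$, completing the proof. Throughout I would use the explicit distances in $\ssig$: two original vertices $x,y$ satisfy $\dist_{\ssig}(x,y)=2\,\dist_{\Si}(x,y)$, an original vertex and an edge-vertex lie at odd distance and two edge-vertices at even distance, each controlled by the corresponding distance in $\Si$. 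In particular $\diam(\ssig)=2d+\delta$ with $0\leqslant\delta\leqslant 2$, so the hypothesis places $s$ strictly below $\diam(\ssig)$. I write $\Ga=\ssig$ and, for a vertex $u$, let $\Ga_i(u)$ be the set of vertices at distance $i$ from $u$.

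The forward implication is routine. If $\Si$ is $(G,m)$-arc transitive then $\ssig$ is locally $(G,s)$-arc transitive by Theorem~\ref{thm:bary}. As every geodesic is an arc, local $(G,s)$-arc transitivity forces each stabiliser $G_u$ to be transitive on every nonempty sphere $\Ga_i(u)$ with $i\leqslant s$: an element carrying one length-$i$ geodesic from $u$ to another also carries the two endpoints to one another. Since $s\leqslant 2d-1<2d+\delta=\diam(\ssig)$, these spheres genuinely arise from geodesics, and $\ssig$ is locally $(G,s)$-distance transitive.

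The substance lies in the converse, where local $(G,s)$-distance transitivity of $\ssig$ must be upgraded to local $(G,s)$-arc transitivity. Here the valency-$2$ structure of the edge-vertices is decisive: a non-backtracking walk makes no choice on passing through an edge-vertex, so every arc of $\ssig$ is determined by its trace of original vertices, and the non-backtracking walks of $\ssig$ correspond bijectively to those of $\Si$. I would argue by induction on the arc length $j\leqslant s$, proving that each $G_u$ is transitive on the $j$-arcs issuing from $u$; the base case $j=1$ is transitivity on $\Ga_1(u)$, and, because the step out of an edge-vertex is forced by valency $2$, the inductive step reduces to controlling how a shorter arc is extended through an original vertex, which is exactly where the sphere-transitivity is applied. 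It is worth noting that for an edge-vertex $e=\{a,b\}$ transitivity on $\Ga_1(e)=\{a,b\}$ provides the interchange of the two ends of the edge, while transitivity on the larger spheres about $e$ encodes the arc-transitivity, rather than merely the distance-transitivity, of $\Si$; the case $s=2$ recovers Corollary~\ref{basic}.

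The main obstacle is the genuine gap between distance-transitivity, which controls only the endpoints of geodesics, and arc-transitivity, which controls all arcs, including those that are not geodesics: two distinct $j$-arcs from $u$ may share an endpoint, and sphere-transitivity alone cannot separate them. The mechanism closing this gap is precisely the hypothesis $s\leqslant 2d-1$, which confines every arc and geodesic under consideration strictly below $\diam(\ssig)=2d+\delta$ and thereby excludes the ``wrap-around'' coincidences in which two short arcs from a common origin reach a point by equally short routes; combined with the forced continuations at the valency-$2$ edge-vertices and the interchange of edge-ends, this allows the inductive step to convert sphere-transitivity into arc-transitivity. This confinement breaks down as soon as $s\geqslant 2d$, where geodesics may meet near the diameter, which is exactly why those values of $s$ lie outside the present theorem and demand the separate treatment announced in the introduction.
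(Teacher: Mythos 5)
Your overall architecture is sound and in fact matches the paper's: the implication ``$\Si$ is $(G,\lceil\frac{s+1}{2}\rceil)$-arc transitive $\Rightarrow$ $\ssig$ is locally $(G,s)$-distance transitive'' is exactly the paper's easy direction (apply Theorem~\ref{thm:bary}, then observe that geodesics are arcs), and the hard direction is, as you say, upgrading sphere-transitivity to arc-transitivity. But your mechanism for that hard direction has a genuine gap: you claim the hypothesis $s\leqslant 2d-1$ by itself ``excludes the wrap-around coincidences in which two short arcs from a common origin reach a point by equally short routes.'' That is false. The coincidences that break your induction come from short cycles in $\Si$, not from proximity to the diameter, and $s\leqslant 2d-1$ does not exclude them. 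Concretely, take $\Si=Q_3$, the $3$-dimensional cube: $d=3$, so $s=4\leqslant 2d-1$ is allowed, but $\girth(\Si)=4$, so in $\ssig$ there are two distinct $4$-arcs (both geodesics) from a vertex $u_0\in V(\Si)$ to the antipodal vertex of a $4$-cycle through $u_0$. In your inductive step, transitivity of $G_{u_0}$ on the sphere at distance $4$ cannot separate these two arcs: an element carrying one endpoint to the other need not fix the initial segment, and without uniqueness of geodesics you cannot conclude that it carries one arc to the other. So the induction does not close.

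What is missing is precisely the paper's Lemma~\ref{lem-girth}: if $\ssig$ is locally $(G,s)$-distance transitive with $s$ even and $s\leqslant 2d-1$, then $\girth(\Si)\geqslant s+2$. This is where the hypothesis $s\leqslant 2d-1$ actually does its work: it guarantees a vertex at $\Si$-distance $t+1$ (where $s=2t$) from a chosen vertex on a shortest cycle, and the distance-transitivity hypothesis is then used to map an edge at the end of such a geodesic onto an edge of the shortest cycle, a contradiction because the cycle contains no pair of vertices at distance $t+1$. The girth bound is thus a consequence of local $(G,s)$-distance transitivity \emph{together with} $s\leqslant 2d-1$, established by a nontrivial induction on $t$; it is not a consequence of the inequality alone, which is what your proposal implicitly assumes. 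Once the girth bound is in hand, the arcs you must control are unique geodesics between their endpoints, and your plan (sphere-transitivity plus the forced continuation at valency-$2$ edge-vertices yields arc-transitivity) can be carried out essentially as in the paper. Note that examples like $Q_3$ do not contradict the theorem: the girth lemma shows the distance-transitivity hypothesis simply never holds for them --- but that is a fact one must prove, not assume.
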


We denote the diameter of $\Si$ by $d$. The diameter of $\ssig$ is $2d+\delta$ with $\delta\in\{0,1,2\}$ (see Remark~\ref{prop-main1}).
Thus the $s$-values for which Theorem~\ref{main3} gives no information are those satisfying $2d\leqslant s\leqslant 2d+\delta$. For very large and very small values of $s$ in this range we can determine explicitly the pairs $\Si, G$ for which $\ssig$ is locally $(G,s)$-distance transitive. In this result, $\rmP$ and $\HoSi$ denote the Petersen graph and the Hoffman-Singleton graph, respectively.

\begin{thm}\label{thm-char}
Let $\Si$ be a connected graph with $|V(\Si)|\geqslant2$ and diameter $d$, let $G\leqslant \Aut(\Sigma)$, and let $s$ be a positive integer such that $2d\leqslant s\leqslant 2d+\delta=\diam(\ssig)$. 
\begin{enumerate}
\item[(a)] If $s\geqslant 15+\delta$, then $\ssig$ is  locally $(G,s)$-distance transitive if and only if $\Si=C_n$ and $G=D_{2n}$, either with $n=s$, or with $n=s+1$ odd.
\item[(b)] If $s\leqslant 5$, then $\ssig$ is  locally $(G,s)$-distance transitive if and only if $\Si$ and $G$ are as in Table {\rm\ref{table}}.
\end{enumerate}
\end{thm}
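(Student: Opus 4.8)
\emph{Overall strategy.} The key is to reduce both parts to the boundary case of Theorem~\ref{main3}. Since local $(G,s)$-distance transitivity of $\ssig$ includes transitivity on every sphere $\Ga_i(v)$ with $i\leqslant s$, it implies local $(G,s')$-distance transitivity for every $s'\leqslant s$. As $s\geqslant 2d>2d-1=2\,\diam(\Si)-1$, I would apply Theorem~\ref{main3} with the value $2d-1$ to conclude that $\Si$ is $(G,\lceil\frac{2d-1+1}{2}\rceil)=(G,d)$-arc transitive. Thus in both (a) and (b) I may assume from the outset that $\Si$ is $(G,d)$-arc transitive with $d=\diam(\Si)$, and in particular $G$-vertex- and $G$-arc-transitive.

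\emph{Part (a).} The hypotheses $15+\delta\leqslant s\leqslant 2d+\delta$ give $2d\geqslant 15$, hence $d\geqslant 8$, so $\Si$ is $(G,8)$-arc transitive. By Weiss's theorem~\cite{Weiss1981} a finite connected $(G,8)$-arc transitive graph has valency at most $2$; being $G$-vertex-transitive, connected and with at least two vertices, $\Si$ must be a cycle $C_n$ (the valency-one graph $K_2$ has $d=1$, incompatible with $s\geqslant 15$). Then $\ssig=C_{2n}$ and $\Aut(\Si)=D_{2n}$. Working inside $D_{4n}=\Aut(C_{2n})$, I would note that transitivity on the two-element sphere $\Ga_1(v)$ at \emph{every} vertex $v$ forces $G$ to contain a reflection fixing each of the $2n$ vertices; since the $n$ reflections of $D_{2n}$ fix disjoint antipodal pairs, $G$ must contain all of them and hence equal $D_{2n}$, which conversely does supply the required reflections. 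Comparing $\diam(C_{2n})=n=2d+\delta$ with the range $2d\leqslant s\leqslant 2d+\delta$ then pins down $n=s$ (when $n$ is even, so $\delta=0$) or $n=s+1$ with $n$ odd (when $\delta=1$), exactly as stated.

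\emph{Part (b).} Now $2d\leqslant s\leqslant 5$ forces $d\leqslant 2$. If $d=1$ then $\Si=K_m$ with $G$ arc-transitive; here I would compute $\diam(\ssig)$ and $\delta$ (namely $\delta=0,1,2$ for $m=2,3,\geqslant 4$) and then test the transitivity of the appropriate sphere-stabilizers directly, using the explicit description of the spheres of $\S(K_m)$ (for instance, $\Ga_4(e)$ of an edge-vertex $e$ consists of the edges disjoint from $e$). This yields the complete-graph entries of Table~\ref{table}. If $d=2$ then $\Si$ is a $(G,2)$-arc transitive graph of diameter $2$ and $s\in\{4,5\}$; the conditions become transitivity of $G_u$, resp.\ $G_e$, on the distance-$4$ and distance-$5$ spheres of $\ssig$, which translate into transitivity on the edges both of whose endpoints lie at distance $2$ in $\Si$.

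\emph{Main obstacle.} I expect the decisive difficulty to be the case $d=2$ of part (b). After rewriting the top-sphere conditions as intrinsic transitivity statements about edges at ``full distance $2$'' in $\Si$ and combining them with $(G,2)$-arc transitivity, I anticipate being forced into strong local regularity: when the valency is at least $3$ the girth should be pushed up to $5$ and $\Si$ should become a distance-regular graph of diameter $2$, i.e.\ a Moore graph, so that the Hoffman--Singleton classification of Moore graphs of diameter~$2$ leaves only $C_5$, the Petersen graph $\rmP$ and the Hoffman--Singleton graph $\HoSi$; the low-girth cases (girth $3$ or $4$: complete bipartite graphs, $C_4$, and similar) must be eliminated or retained by direct inspection. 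The two delicate points will be (i) excluding valency-$\geqslant 3$ graphs of girth $3$ or $4$, and (ii) the sufficiency check, verifying for each surviving pair $(\Si,G)$ and each admissible $s$ that the relevant sphere-stabilizers really do act transitively, so as to justify every row of Table~\ref{table}.
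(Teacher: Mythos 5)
Your skeleton is the same as the paper's: feed local $(G,s)$-distance transitivity into Theorem~\ref{main3} at level $2d-1$ to get $(G,d)$-arc transitivity of $\Si$, kill the large-$s$ case with Weiss's theorem, and split the small-$s$ case according to $d$ and the girth. Part (a) as you present it is sound; in fact your reflection argument inside $\Aut(C_{2n})=D_{4n}$ gives a self-contained proof of the statement that the paper merely quotes as Proposition~\ref{ex-Cn} ($G$ must contain a reflection through every vertex of $C_{2n}$, these come in $n$ reflections fixing disjoint antipodal pairs, so $G=D_{2n}$; conversely each sphere of $C_{2n}$ has at most two points, swapped by the appropriate reflection). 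The genuine gap is in part (b), in the case $d=2$, girth $5$: your finishing move, that ``the Hoffman--Singleton classification of Moore graphs of diameter $2$ leaves only $C_5$, $\rmP$ and $\HoSi$'', is not a theorem. Hoffman and Singleton \cite{HoSi} prove only that such a graph has valency $2$, $3$, $7$ or $57$, and the existence of a Moore graph of valency $57$ is a famous open problem, so the classification you invoke cannot eliminate it. Closing this requires extra symmetry input that you never bring to bear: either G.~Higman's theorem that a Moore graph of valency $57$ cannot be vertex-transitive, or the paper's own route, which first upgrades $(G,2)$-arc transitivity to $(G,3)$-arc transitivity via Proposition~\ref{3-arc} (itself needing a separate girth-$\geqslant5$ argument) and then applies Ivanov's classification \cite[Lemma 3.4]{Ivanov87} of such graphs of girth $5$.

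Two further places where you defer to ``direct inspection'' in fact contain most of the paper's work in part (b). For $d=1$, $\Si=K_n$ ($n\geqslant4$) and $s=4$, local $(G,4)$-distance transitivity makes $G_e$ transitive on $\Ga_4(e)$, the edges disjoint from $e$, and hence makes $G$ transitive on $4$-subsets of $V(\Si)$; enumerating such $G$ is not a routine check but an appeal to the classification of $4$-homogeneous permutation groups (\cite[Theorem 9.4B]{Dixon96}, resting on CFSG). That is exactly where the exceptional entry $n=9$, $G=\PGaL(2,8)$ of Table~\ref{table} comes from, after the other candidates $\PGL(2,8)$ and $\PGaL(2,32)$ are removed by a divisibility argument; a ``direct test of sphere-stabilizers'' cannot produce this list. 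Likewise, for $d=2$ and girth $4$ you must prove, not merely inspect, that $\Si$ is forced to be $K_{n,n}$ --- the paper does this with a dedicated argument on $4$-cycles using transitivity of $G_{e_0}$ on $\Ga_4(e_0)$ --- and must then characterise the admissible groups as in Proposition~\ref{ex-Knn}. Neither step is supplied in your proposal, so as written only part (a) is complete.
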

\begin{table}[]\label{table}
\caption{$\Si$ and $G$ for $s=2,3,4,5$}\smallskip
\begin{tabular}{|l|l|l|l|l|l|}
\hline
   $\Si$ &  $s$ & $d$ & $\delta$ & $\Aut(\Si)$ &$G$  \\
\hline
$K_2$ & $2$   & 1 & 0 & $S_2$ & $S_2$ \\
$K_3$ & $2,3$ & 1 & 1 & $S_3$ & $S_3$ \\
$K_n$, ($n\geqslant4$) &$2,3$ &$1$& $2$   &$S_{n}$ & $3$-transitive subgroup of $S_n$   \\
          &$4$ &  &      &   & $4$-transitive subgroup of $S_n$  or\\
	  &    &  &      &   &  $n=9$ and $G=\PGaL(2,8)$\\
$K_{n,n}$, ($n\geqslant2$) &$4$ &$2$& $0$ & $S_{n}\wr S_{2}$& Conditions of Example~\ref{ex-Knn}         \\
$C_{5}$&  $4,5$ & 2 & 1   &$D_{10}$ & $D_{10}$   \\
$\rmP$    &  $4,5$ & 2 & 2   &$S_{5}$& $S_{5}$    \\
$\HoSi$ &  $4,5$ & 2 & 2   &$\PSU(3,5).2$& $\PSU(3,5)$ or $\PSU(3,5).2$      \\
\hline
\end{tabular}
\end{table}

The first part of Theorem~\ref{thm-char} is an application of the deep theorem of R. Weiss \cite{Weiss1981} that the only 8-arc transitive graphs are the cycles, while the second part uses the classification by A.A. Ivanov \cite{Ivanov87} of $3$-arc transitive graphs of girth 5. As R. Weiss's result relies on the finite simple group classification so also does Theorem~\ref{thm-char}(a). The local distance transitivity properties claimed for the graphs $\ssig$ in Table~\ref{table} are established in Section~\ref{sec-examples}.

We prove in fact that, for each of the graphs $\Si$ in Table~\ref{table}, $\ssig$ is locally distance transitive. This enables us to classify all graphs $\Si$ of diameter at most 2 for which $\ssig$ is locally distance
transitive.

\begin{cor}\label{ldt}
Let $\Si$ be a connected graph with $|V(\Si)|\geqslant2$ and $\diam(\Si)\leqslant 2$, and let $G\leqslant \Aut(\Sigma)$. Then $\ssig$ is locally $G$-distance transitive if and only if $\Si,G$ are as in Table~\ref{table} (for the maximum value of $s$).
\end{cor}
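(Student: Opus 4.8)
The plan is to deduce Corollary~\ref{ldt} from Theorem~\ref{thm-char} by unwinding what local $G$-distance transitivity means. Being locally $G$-distance transitive is the same as being locally $(G,s)$-distance transitive for $s=\diam(\S(\Si))=2d+\delta$, and this property is inherited by all smaller values of $s$. Since the hypothesis gives $d\leqslant 2$, we have $\diam(\S(\Si))=2d+\delta\leqslant 6$; in particular the bound $s\geqslant 15+\delta$ required in Theorem~\ref{thm-char}(a) can never hold here, so only part~(b) of that theorem is relevant and we must work with $s\leqslant 5$.

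First I would record a short arithmetic observation. For every graph $\Si$ under consideration the value $s_0:=\min\{2d+\delta,5\}$ satisfies $2d\leqslant s_0\leqslant 2d+\delta$ and $s_0\leqslant 5$, because $d\leqslant 2$ forces $2d\leqslant 4\leqslant 5$. Checking the graphs in Table~\ref{table} one by one, $s_0$ turns out to equal the largest value of $s$ listed there for the corresponding graph. The only graphs for which $s_0$ is strictly smaller than $\diam(\S(\Si))$ are those with $(d,\delta)=(2,2)$, namely $\rmP$ and $\HoSi$, where $\diam(\S(\Si))=6$ while $s_0=5$.

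For the forward implication, assume $\S(\Si)$ is locally $G$-distance transitive. Then it is locally $(G,s_0)$-distance transitive, and since $2d\leqslant s_0\leqslant 2d+\delta$ with $s_0\leqslant 5$, Theorem~\ref{thm-char}(b) applies and shows that $\Si$ and $G$ form one of the entries of Table~\ref{table}; as $s_0$ is the maximal tabulated value of $s$ for that graph, this is exactly the entry ``for the maximum value of $s$''. The converse is immediate from the fact, established in Section~\ref{sec-examples}, that $\S(\Si)$ is locally distance transitive for every pair $\Si,G$ appearing in Table~\ref{table}.

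The one point that needs genuine care --- and the main (if modest) obstacle --- is the case $(d,\delta)=(2,2)$ of the Petersen and Hoffman--Singleton graphs, where $\diam(\S(\Si))=6$ falls outside the range $s\leqslant 5$ handled by Theorem~\ref{thm-char}(b). Here I would use $s_0=5$ to identify $\Si$ and $G$ (as $\rmP$ with $G=S_5$, or $\HoSi$ with $G=\PSU(3,5)$ or $\PSU(3,5).2$) from local $(G,5)$-distance transitivity, and then appeal to Section~\ref{sec-examples} to supply the stronger local $(G,6)$-distance transitivity needed for full local distance transitivity. In this way the diameter-$6$ level never has to be analysed directly.
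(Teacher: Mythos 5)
Your proposal is correct and follows essentially the same route as the paper: the paper splits into the cases $2d+\delta\leqslant 5$ (apply Theorem~\ref{thm-char}(b) with $s=2d+\delta$) and $2d+\delta=6$, i.e.\ $(d,\delta)=(2,2)$ (apply it with $s=5$), which is exactly your $s_0=\min\{2d+\delta,5\}$ in disguise, and both arguments then invoke the Section~\ref{sec-examples} results for the converse and for upgrading to full local distance transitivity in the diameter-$6$ case. Your explicit check that $s_0$ coincides with the maximum tabulated value of $s$ for each graph is a small point the paper leaves implicit, but it is the same proof.
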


The first two authors are considering the unresolved cases of Theorem~ \ref{thm-char}, attacking in particular the problem:

\begin{prob}
Classify graphs $\Si$ for which $\ssig$ is locally distance transitive.
\end{prob}




\section{Concepts and symmetry for graphs}\label{sec-pre}

For a positive integer $s$, an {\it $s$-arc} of a graph $\Sigma$ is an
$(s+1)$-tuple $(v_0,v_1,\ldots, v_s)$ of vertices such that
$\{v_{i-1},v_{i}\}\in E(\Sigma)$ for $1\leqslant i\leqslant s$, and $v_{j-1}\neq
v_{j+1}$ for $1\leqslant j\leqslant s-1$. The integer $s$ is called
the length of the $s$-arc. A $1$-arc is often called an arc.

The {\it distance} between two vertices $v_1$ and $v_2$, denoted by
$\dist_\Sigma(v_1,v_2)$, is the minimum number $s$ such that there
exists an $s$-arc from $v_1$ to $v_2$.
For a connected graph $\Sigma$, 
the {\it diameter of $\Sigma$}, denoted $\diam(\Sigma)$, is the maximum distance between two vertices of $\Si$.
For $0\leqslant i\leqslant\diam(\Sigma)$ define
\[
\Sigma_i:=\{(v,w)\in
V(\Sigma)\times V(\Sigma)\,|\,\dist_\Sigma(v,w)=i\}
\]
and, for $v\in V(\Sigma)$, define
$\Sigma_i(v):=\{w\in V(\Sigma)|\dist_\Sigma(v,w)=i\}$. We often write $\Sigma(v)$ for $\Sigma_1(v)$.
A graph $\Sigma$ is \emph{bipartite} if its vertex set can be partitioned into two non-empty sets called {\it biparts} such that every edge has one vertex in each bipart.

We denote a complete graph and a cycle on $n$ vertices by $K_n$ and $C_n$ respectively;
$K_{m,n}$ denotes the complete bipartite graph with biparts of sizes $m$ and $n$.
A graph is \emph{regular} if its vertices have a constant valency, that is, lie in a constant number of edges. If $\Sigma$ contains a cycle then the \emph{girth} of $\Sigma$ is the length of its shortest cycle.

Let $G\leqslant\Aut(\Sigma)$. The properties we study are defined
relative  to the $G$-action on $\Sigma$. Each property $\calP$ is defined by the requirement that $G$ be transitive on each set in some collection
$\calP(\Sigma)$ of sets, as in Table~\ref{def:trans}. Moreover, each property has a `local variant' that is defined by requiring that, for each
$v\in V(\Sigma)$, the stabiliser $G_v$ be transitive on each set
in a related collection $\calP(\Sigma,v)$ of sets, as in Table~\ref{def:localtrans}. Notice that our definition of local $(G,s)$-arc transitivity is slightly stronger that the definition in \cite{GLP} (actually it is equivalent to our definition as long as $\Si$ has no vertex with valency $1$).


\begin{table}[h]
\begin{center}
\caption{Properties $\calP$ for $G$-action on a connected graph $\Sigma$}\label{def:trans}\smallskip
\begin{tabular}{|l|l|}
\hline
Property $\calP$  & $\calP(\Sigma)=\{\Delta_i|1\leqslant i\leqslant s\}$, and
$\Delta_s\neq\emptyset$\\
\hline
$G$-arc transitivity& $s=1$ and $\Delta_1=\Sigma_{1}$\\
$(G,s)$-arc transitivity& $\Delta_i$ is the set of $i$-arcs of $\Sigma$\\
$(G,s)$-distance transitivity& $\Delta_i=\Sigma_i$\\
$G$-distance transitivity& $s=\diam(\Sigma)$ and $\Delta_i=\Sigma_i$\\
\hline
\end{tabular}
\end{center}
\end{table}

\begin{table}[h]
\begin{center}
\caption{Local properties $\calP$ for $G$-action on a connected graph $\Sigma$} \label{def:localtrans}\smallskip
\begin{tabular}{|l|l|}
\hline
Local property $\calP$  & $\calP(\Sigma,v)=\{\Delta_i(v)|1\leqslant i\leqslant s\}$, and \\
&$\Delta_s(v)\neq\emptyset$ for some $v$\\
\hline
local $G$-arc transitivity& $s=1$ and $\Delta_1(v)=\Sigma_1(v)$\\
local $(G,s)$-arc transitivity& $\Delta_i(v)$ is the set of $i$-arcs of
$\Sigma$ with\\
& initial vertex $v$\\
local $(G,s)$-distance transitivity& $\Delta_i(v)\Sigma_i(v)$\\
local $G$-distance transitivity& $s=\diam(\Sigma)$ and
$\Delta_i(v)=\Sigma_i(v)$\\
\hline
\end{tabular}
\end{center}
\end{table}

These concepts are sometimes used without reference to a particular
group $G$, especially when $G=\Aut(\Sigma)$.
It follows from the definitions that, for $s\geqslant 2$, (local) $(G, s)$-arc transitivity  implies  (local) $(G, s-1)$-arc transitivity, and (local)
$(G,s)$-distance transitivity implies (local) $(G, s-1)$-distance transitivity.

\begin{rem}\label{s=1}
For $s=1$, several of the concepts coincide. Indeed, for a connected graph $\Sigma$ and $G\leq \Aut(\Sigma)$, the properties of local $G$-arc transitivity, local $(G,1)$-arc transitivity and local $(G,1)$-distance transitivity are equivalent.
\end{rem}

\begin{lem}\label{lem-smalls}
Let $\Sigma$ be a connected graph 
and let $G\leqslant\Aut(\Sigma)$.
\begin{enumerate}
\item[(a)] If $G$ is intransitive on $V(\Sigma)$, then each of the equivalent conditions of Remark~\ref{s=1} holds if and only if $G$ is transitive on $E(\Sigma)$. Moreover, in this case,
$\Sigma$ is a bipartite graph and the $G$-orbits in $V(\Sigma)$ are the two biparts.
\item[(b)] $\Sigma$ is locally $(G,2)$-arc transitive if and only if, for all $v\in V(\Sigma)$,
$G_v$ is $2$-transitive on $\Sigma_1(v)$.
\end{enumerate}
\end{lem}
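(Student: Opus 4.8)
The plan is to handle parts (a) and (b) separately, in each case translating the local transitivity hypotheses into statements about $G$-orbits on vertices and arcs and about the actions of the pointwise arc-stabilisers $G_{v,w}$ on the punctured neighbourhoods $\Sigma(w)\setminus\{v\}$.

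For part (a), recall that the equivalent conditions of Remark~\ref{s=1} say that $G_v$ is transitive on $\Sigma(v)$ for every $v$. The first observation I would make is that, being a single $G_v$-orbit, $\Sigma(v)$ lies inside a single $G$-orbit on $V(\Sigma)$, so all neighbours of any fixed vertex lie in one $G$-orbit. Connectedness (with $|V(\Sigma)|\geqslant2$, so every vertex has a neighbour) then pins down the structure: if $v$ lies in the orbit $O$ and its neighbours lie in the orbit $O'$, then symmetrically every vertex of $O'$ has all its neighbours in $O$, so $O\cup O'$ is closed under adjacency and hence equals $V(\Sigma)$. Intransitivity forbids $O=O'$ (that would give one orbit), leaving exactly two orbits $O,O'$, all edges running between them; thus $\Sigma$ is bipartite with biparts $O,O'$. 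Edge-transitivity now follows because $G$ is transitive on $O$ and $G_v$ is transitive on $\Sigma(v)$, so $G$ is transitive on the arcs issuing from $O$ and in particular on edges. Conversely, if $G$ is transitive on $E(\Sigma)$ and intransitive on $V(\Sigma)$, then $G$ has at most two orbits on arcs; a single arc-orbit would make $G$ transitive on $V(\Sigma)$ (every vertex being the tail of an arc), so there are exactly two arc-orbits whose initial-vertex sets are the two biparts. Then for $w_1,w_2\in\Sigma(v)$ an element $g$ carrying the edge $\{v,w_1\}$ to $\{v,w_2\}$ must fix $v$, since the only alternative $v^g=w_2$ is impossible with $v,w_2$ in different biparts; hence $G_v$ is transitive on $\Sigma(v)$, giving the local condition.

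For part (b) I would rewrite both sides in terms of arc-stabilisers. Writing a $2$-arc at $v$ as $(v,w,x)$ with $w\in\Sigma(v)$ and $x\in\Sigma(w)\setminus\{v\}$, the map $(v,w,x)\mapsto w$ is a $G_v$-equivariant map onto $\Sigma(v)$ whose fibre over $w$ is $G_{v,w}$-isomorphic to $\Sigma(w)\setminus\{v\}$. Here is the point requiring care: since $G_v$ is transitive on $\Sigma(v)$ all neighbours of $v$ have equal valency, so either all are leaves, whence there are no $2$-arcs at $v$ and the condition is vacuous, or none are, whence the map is onto. By the standard fact that transitivity lifts from base and fibres, the hypothesis that $G_v$ be transitive on $\Sigma(v)$ for all $v$ makes ``$G_v$ transitive on the $2$-arcs at $v$ for all $v$'' equivalent to ``$G_{v,w}$ transitive on $\Sigma(w)\setminus\{v\}$ for every arc $(v,w)$''. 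Hence local $(G,2)$-arc transitivity is equivalent to the pair of conditions: $G_v$ transitive on $\Sigma(v)$ for all $v$, and $G_{v,w}$ transitive on $\Sigma(w)\setminus\{v\}$ for all arcs $(v,w)$.

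On the other side, $G_v$ is $2$-transitive on $\Sigma(v)$ precisely when $G_v$ is transitive on $\Sigma(v)$ and $G_{v,w}$ is transitive on $\Sigma(v)\setminus\{w\}$ for $w\in\Sigma(v)$, so ``$G_v$ $2$-transitive on $\Sigma(v)$ for all $v$'' reformulates as: $G_v$ transitive on $\Sigma(v)$ for all $v$, and $G_{v,w}$ transitive on $\Sigma(v)\setminus\{w\}$ for all arcs $(v,w)$. The two reformulations now differ only in $\Sigma(w)\setminus\{v\}$ versus $\Sigma(v)\setminus\{w\}$; but since the edge relation is symmetric, quantifying over all arcs $(v,w)$ makes these families of conditions literally identical upon swapping the roles of $v$ and $w$, which yields the equivalence. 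I expect the main obstacle to be precisely the bookkeeping around the surjectivity of the fibre map and the convention for $2$-transitivity on a neighbourhood of size at most one: these are dispatched by the valency-preservation remark above (degenerate fibres occur uniformly and contribute nothing on either side), with the residual small cases such as $|V(\Sigma)|\leqslant2$ checked directly.
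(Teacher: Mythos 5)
Your proof is correct, but note that the paper does not actually prove this lemma: it cites Lemmas 3.1 and 3.2 of \cite{GLP}, adding only the remark that with its stronger definition of local $(G,s)$-arc transitivity the hypothesis of \cite{GLP} that no vertex has valency $1$ can be dropped. Your argument supplies, in a self-contained way, essentially the standard reasoning behind that citation. In part (a), the observation that each $\Sigma(v)$ lies in a single $G$-orbit, so that the orbit of $v$ together with the orbit containing $\Sigma(v)$ is closed under adjacency and hence, by connectedness, exhausts $V(\Sigma)$, is exactly the right route to the bipartition-by-orbits conclusion, and both transitivity transfers (orbit-transitivity plus local transitivity giving edge-transitivity; edge-transitivity plus the orbit bipartition forcing an edge-mapping element to fix $v$) are sound. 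In part (b), fibring the $2$-arcs at $v$ over $\Sigma(v)$ with fibres $\Sigma(w)\setminus\{v\}$, and then noting that the two stabiliser conditions ($G_{v,w}$ transitive on $\Sigma(w)\setminus\{v\}$ versus on $\Sigma(v)\setminus\{w\}$) become literally the same family upon swapping $v$ and $w$ in the quantification over arcs, is precisely the correct mechanism; moreover your valency-preservation remark is exactly what substantiates the paper's parenthetical claim that valency-$1$ vertices do no harm under its definition.

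One caveat, which is a blemish of the statement rather than of your argument: the paper's definition of local $(G,2)$-arc transitivity includes the existence clause that $\Delta_2(v)\neq\emptyset$ for some $v$, i.e.\ that $\Sigma$ contains a $2$-arc, and this clause is not reproduced by your reformulation. Hence for $\Sigma=K_2$ (or $K_1$) the right-hand side of (b) holds vacuously while the left-hand side fails, so the equivalence as literally stated breaks; under the opposite convention, that $2$-transitivity presupposes a set of at least two points, it would instead break for the stars $K_{1,n}$, which your fibration argument correctly shows to satisfy both sides under the vacuous convention. So your proof establishes the equivalence for every connected $\Sigma$ containing a $2$-arc (the only regime in which the lemma is used), but ``checking the residual small cases directly'', as you propose, would expose this discrepancy rather than dispose of it.
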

This Lemma is very easy to prove. See for instance \cite[Lemma 3.1, Lemma 3.2]{GLP} for details, noticing that with our definition of local $(G,s)$-arc transitivity, the hypothesis that no vertex has valency $1$ can be dropped.

%

Each of the `global' properties $\calP$ for a graph $\Sigma$ implies the local variant, but the converse is not true in general. For example if $\Sigma=K_{m,n}$ and $G=S_m\times S_n$,
then all the local properties, but none of the global properties, hold with $s=2$.  
However, for each of the local properties, the corresponding global property holds if and only if $G$ is transitive on $V(\Sigma)$ (and in this case, in particular, all vertices have the same valency).



\section{Basic properties of subdivision graphs}\label{sec-local}

We give some further definitions and results related  to graphs that will be used in the next sections. The \emph{line graph} $L(\Sigma)$ of a graph $\Sigma$ is defined as the graph with vertex set $E(\Sigma)$ and edges $\{e_1, e_2\}$, for $e_1, e_2\in E(\Sigma)$ such that $e_1\cap e_2\neq \emptyset$. The \emph{distance $2$ graph} of $\Sigma$ is the graph $\Sigma^{[2]}$ with the same vertex set as $\Sigma$ but with the edge set replaced by the set of all vertex pairs $\{u,v\}$ such that $d_\Sigma(u,v)=2$.  If $\Sigma$ is connected, then all vertices at even $\Sigma$-distance from $v$ lie in the same connected component of $\Sigma^{[2]}$, as do all vertices at odd $\Sigma$-distance from $v$, and so $\Sigma^{[2]}$  has at most two connected components. Moreover, if $\Sigma$ is connected and bipartite, then $\Sigma^{[2]}$  has exactly two components.



Clearly, if $\Sigma$ is connected, then the subdivision graph $\S(\Si)$ is connected and bipartite with biparts $V(\Si)$ and $E(\Si)$. Note that, in $\S(\Si)$, each vertex in $E(\Si)$ has valency $2$ while the valency of each vertex in $V(\Si)$ is equal to its valency in $\Si$.


The graph $\S(\Si)$ is closely related to the line graph $L(\Si)$ of $\Si$, the link arising via the distance $2$ graph $\S(\Si)^{[2]}$ of $\S(\Si)$.  As mentioned in the introduction, for connected graphs $\Si$, we can reconstruct $\Si$ from its subdivision graph $\ssig$. 
Indeed, for a connected graph $\Si$ (not $K_1$), $\ssig^{[2]}$ has two connected components, namely $\Si$ and $L(\Si)$. Moreover, either $\Si\cong L(\Si)\cong C_n$ for some $n\geqslant 3$, or $\Si$ is the unique connected component of $\ssig^{[2]}$ containing vertices of valency different from $2$ in $\ssig$.




The diameter of $\Si$ and $\S(\Si)$ are linked in the following way.

\begin{rem}\label{prop-main1}
Suppose that $\Sigma$ is a connected graph with $|V(\Sigma)|\geq 2$. Then
\begin{enumerate}
\item[(a)] Distances in $\Sigma$ and $\Gamma=\ssig$ are related as follows: for $\alpha, \beta \in V(\Gamma)$,
\[
d_{\Gamma}(\alpha, \beta)\left\{
  \begin{array}{ll}
    2d_{\Si}(\alpha, \beta) & \hbox{ if }\alpha \hbox{ and }   \beta \in V(\Si) ;\\
   2\min\{d_{\Si}(\alpha, u),d_{\Si}(\alpha, v)\}+1  & \hbox{ if } \alpha\in V(\Si) \hbox{ and } \\
   & \beta=\{u, v\} \in E(\Si); \\
   2\min\{d_{\Si}(x, y),d_{\Si}(x, v),& \hbox{ if } \alpha=\{x,u\} \hbox{ and }\\
   \quad \quad \quad d_{\Si}(y, u),d_{\Si}(u, v) \}+2&  \beta=\{y,v\}\in E(\Si);
  \end{array}
\right.
\]
\item[(b)] It follows easily that $\diam(\Gamma)=2\diam(\Sigma)+\delta$ for some $\delta\in\{0,1,2\}$.
\item[(c)] The inequality $0\leqslant \delta\leqslant 2$ cannot be improved (as is illustrated by the complete graphs on up to $4$ vertices). Note that $\delta=2$ if and only if $\Sigma$ contains two edges $e=\{x, y\}, f=\{u, v\}\in E(\Si)$ satisfying $d_{\Si}(x, u)=d_{\Si}(x, v)=d_{\Si}(y, u)=d_{\Si}(y, v)=\diam(\Si)$.
\end{enumerate}
\end{rem}

\section{Local distance and arc transitivity of $\ssig$}\label{sec:proofs1}

By Remark~\ref{prop-main1}, from now on we assume the following hypothesis.

\begin{hyp}\label{hyp}
$\Si$ is a connected graph of diameter $d$ with $|V(\Si)|\geqslant2$, such that $\ssig$ has diameter $2d+\delta$, for some $\delta\in \{0,1,2\}$, and  $G\leqslant \Aut(\Sigma)$. 
\end{hyp}

We study relationships between various symmetry properties of $\Sigma$ and $\ssig$. In particular we prove Theorems~\ref{thm:bary} and \ref{main3}.
First we consider the effect of local transitivity conditions on $\ssig$ related to edges of $\Si$. Note that the assumption $\Si\ne K_2$, under Hypothesis~\ref{hyp}, is equivalent to the condition that $\Si$ contains at least one 2-arc. 

\begin{lem}\label{lem3}
Suppose that Hypothesis~{\rm\ref{hyp}} holds, and in part (b) suppose that $\Si\ne K_2$. Set $\Ga:=\ssig$.
\begin{enumerate}
\item[(a)] If $G_e$ is transitive on $\Gamma_1(e)$ for all $e\in E(\Sigma)$, then $\Sigma$ is $G$-vertex transitive.
\item[(b)] $G_e$ is transitive on $\Gamma_1(e)$ and $\Ga_2(e)$ if and only if either
$\Sigma$ is $(G,2)$-arc transitive, or $\Si=C_n$ with $n$ even and $G\cong D_n$ has two orbits in $E(\Si)$.
\end{enumerate}
\end{lem}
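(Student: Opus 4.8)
The plan is to translate the transitivity conditions on $\Ga=\ssig$ back to conditions on $\Si$ using the explicit distance formula in Remark~\ref{prop-main1}(a). The key structural fact is that for an edge $e=\{u,v\}\in E(\Si)$, its neighbours in $\Ga$ are exactly the two vertices $u,v\in V(\Si)$, so $\Ga_1(e)=\{u,v\}$, while its distance-$2$ vertices $\Ga_2(e)$ are precisely the edges of $\Si$ sharing exactly one endpoint with $e$, i.e.\ the neighbours of $e$ in the line graph $L(\Si)$. Thus $G_e$ acts on a set of size $2$ (namely $\{u,v\}$) and on the set of $\Si$-edges adjacent to $e$.

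\bigskip

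For part (a), if $G_e$ is transitive on $\Ga_1(e)=\{u,v\}$ for every $e$, then every edge-stabiliser swaps the two endpoints of its edge. Since $\Si$ is connected, I would argue that transitivity of each $G_e$ on $\{u,v\}$, combined with the fact that every vertex lies on an edge (as $|V(\Si)|\geqslant 2$ and $\Si$ is connected), forces the $G$-orbits on $V(\Si)$ to coincide: any two adjacent vertices are in the same orbit, and connectivity propagates this to all of $V(\Si)$, giving $G$-vertex transitivity.

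\bigskip

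For part (b), the forward direction is the substantive one. Assuming $G_e$ is transitive on both $\Ga_1(e)$ and $\Ga_2(e)$ for all $e$, part (a) already gives $G$-vertex transitivity, so it remains to analyse the action on $\Ga_2(e)$, which corresponds to the edges adjacent to $e$ in $\Si$. Transitivity of $G_e$ on the endpoints $u,v$ means these two "bundles" of adjacent edges (those through $u$ and those through $v$) are swapped or merged; combined with transitivity on all of $\Ga_2(e)$, I expect to deduce that the vertex-stabiliser $G_u$ acts transitively on the arcs of $\Si$ emanating from $u$, i.e.\ that $\Si$ is locally $(G,2)$-arc transitive, which by Lemma~\ref{lem-smalls}(b) is equivalent to each $G_v$ being $2$-transitive on $\Si(v)$. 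With $G$-vertex transitivity this should upgrade to genuine $(G,2)$-arc transitivity — \emph{except} for a degenerate case. The cycle $C_n$ with $n$ even is precisely where the argument can fail: there $\Si(v)$ has size $2$, so "$2$-transitivity on $\Si(v)$" is automatic, yet $G\cong D_n$ may have two edge-orbits (the two $G$-classes of edges in an even cycle), and this must be carved out as the exceptional conclusion. I would verify the converse directions by checking directly that $(G,2)$-arc transitivity of $\Si$ yields transitivity of $G_e$ on both $\Ga_1(e)$ and $\Ga_2(e)$, and that the even-cycle/$D_n$ configuration does too.

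\bigskip

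The main obstacle I anticipate is the careful case separation in the forward direction of (b): disentangling how transitivity of $G_e$ on the \emph{full} set $\Ga_2(e)$ (edges adjacent to $e$ through either endpoint) interacts with transitivity on the two endpoints. One must rule out the possibility that $G_e$ mixes the two endpoint-bundles in a way that gives transitivity on $\Ga_2(e)$ without giving $2$-transitivity of the vertex-stabiliser on $\Si(v)$ — and the cycle exception shows this mixing genuinely occurs when valencies are small. The condition $\Si\ne K_2$ (guaranteeing $\Si$ has a $2$-arc, so $\Ga_2(e)\ne\emptyset$ for some $e$) is exactly what makes the hypothesis on $\Ga_2(e)$ nonvacuous, so I would keep track of where that assumption is used.
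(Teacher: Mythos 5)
Your overall route is the same as the paper's: part (a) by propagating endpoint-swaps along a path, and part (b) by splitting $\Ga_2(e)$ into the two bundles of edges through $u$ and through $v$, reducing to $2$-transitivity of vertex stabilisers via Lemma~\ref{lem-smalls}(b), and treating cycles separately. However, your justification of the exceptional case contains a genuine error. You claim that for a cycle ``$\Si(v)$ has size $2$, so $2$-transitivity on $\Si(v)$ is automatic.'' This is false: $2$-transitivity of $G_v$ on a $2$-element set requires an element of $G_v$ interchanging the two neighbours, and in the exceptional configuration ($\Si=C_n$ with $n$ even, $G\cong D_n$ with two edge orbits) the stabiliser $G_v$ is \emph{trivial}, so it is not even transitive on $\Si(v)$. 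Moreover your claim is inconsistent with your own conclusion: if $G_v$ were $2$-transitive on $\Si(v)$ for every $v$, then by Lemma~\ref{lem-smalls}(b) $\Si$ would be locally $(G,2)$-arc transitive, and combined with the vertex-transitivity from part (a) this yields $(G,2)$-arc transitivity, hence edge-transitivity --- contradicting the two edge orbits you (correctly) attribute to this case. With your stated reasoning the exceptional case could not be carved out at all; it would wrongly be absorbed into the first alternative.

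The correct mechanism, which your plan is missing, is this. What transitivity of $G_e$ on both $\Ga_1(e)$ and $\Ga_2(e)$ actually gives (since any element of $G_e$ either fixes or swaps the two bundles) is that $G_{u,v}$ is transitive on $\Si_1(u)\setminus\{v\}$, for every edge $\{u,v\}$. When the valency is $2$ this set is a singleton, so the condition is \emph{vacuous} and gives no control over $G_u$ whatsoever (indeed $G_u$ can be trivial); that, not ``automatic $2$-transitivity,'' is why cycles escape the argument and must be analysed by hand to see which subgroups of $D_{2n}$ survive. When the valency is at least $3$ there is still work to do: one needs the observation that the subgroup $\langle G_{u,v'} \mid v'\in\Si_1(u)\rangle$ of $G_u$ is transitive on $\Si_1(u)$ (possible precisely because there are at least three neighbours), and only then does transitivity of each point stabiliser $G_{u,v'}$ on $\Si_1(u)\setminus\{v'\}$ upgrade to $2$-transitivity of $G_u$ on $\Si_1(u)$. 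Your ``I expect to deduce'' skips exactly this step. Once these two points are repaired, the rest of your outline (part (a), the use of Lemma~\ref{lem-smalls}(b), and the direct verification of both converse directions) matches the paper's proof.
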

\begin{proof}
(a) Let $u, v\in V(\Sigma)$.  There is a path $x_1, x_2, x_3, \cdots, x_n$ in $\Si$ such that $x_1=u, x_n=v$, since $\Sigma$ is connected. By assumption, for each $i$ there exists $g_i\in G_{\{x_i, x_{i+1}\}}$ such that $x_i^{g_i}=x_{i+1}$. The element $g_1g_2\cdots g_{n-1}$ maps $x_1=u$ to $x_n=v$.

(b) Let $e=\{u,v\}\in E(\Si)$. Then $\Gamma_1(e)=\{u, \, v\}$ and $\Gamma_2(e)(\Ga_1(u)\cup\Ga_1(v))\setminus\{e\}$. Moreover, $G_e$ is transitive on $\Ga_2(e)$ if and only if $G_e$ is transitive on $\Ga_1(e)$ and the stabiliser $G_{u,v}$ is transitive on $\Ga_1(u)\setminus\{e\}$ (or equivalently $G_{u,v}$ is transitive on
$\Si_1(u)\setminus\{v\}$).

\smallskip\noindent
$(\Longrightarrow)$ Suppose that, for all $e\in E(\Si)$,  $G_e$ is transitive on $\Gamma_i(e)$, for $i=1, 2$, and let $e=\{u,v\}\in E(\Si)$.
By Part (a), $G$ is transitive on $V(\Si)$, and so, since $\Si$ is connected and $\Si\ne K_2$, all vertices of $\Si$ have valency at least $2$.
As discussed above,
$G_{u,v}$ is transitive on $\Si_1(u)\setminus\{v\}$, and this holds for all edges $\{u,v'\}$ containing $u$. If $\Si$ has valency at least 3 then the subgroup $\langle G_{u,v'}\,|\,v'\in\Si_1(u)\rangle$ of $G_u$ is transitive on $\Si_1(u)$, and it follows that $G_u$ is 2-transitive on $\Si_1(u)$. In this case $\Si$ is $G$-vertex transitive by part (a), and locally $(G,2)$-arc transitive by Lemma~\ref{lem-smalls}(b), and hence $\Si$ is  $(G,2)$-arc transitive. On the other hand if $\Si$ has valency 2 then $\Si=C_n$ for some $n$. By part (a), $G$ is transitive on $V(\Si)$ and we have that $G_e$ interchanges the endpoints of $e$. Thus either $G=D_{2n}$ is 2-arc transitive on $\Si$, or $n$ is even and $G\cong D_n$ with two orbits on $E(\Si)$.

\smallskip\noindent
$(\Longleftarrow)$ In the exceptional case where $G=D_n$ acting with two edge orbits on $C_n$, the required properties of $G_e$ hold for all edges $e$. Thus we may suppose that $\Sigma$ is $(G,2)$-arc transitive. In particular  $G$ is transitive on $V(\Si)$, and so, since $\Si$ is connected and $\Si\ne K_2$, all vertices of $\Si$ have valency at least $2$.
Also $\Si$ is $(G,1)$-arc transitive so  $G_e$ is transitive on $\Gamma_1(e)$ for each edge $e$. Further, by Lemma~\ref{lem-smalls}(b)), $G_u$ is $2$-transitive on $\Si(u)$ for all $u\in V(\Si)$. Therefore, for $e=\{u,v\}\in E(\Sigma)$, $G_{u,v}$ is transitive on $\Si_1(u)\setminus\{v\}$. Thus by our remarks above,  $G_e$ is transitive on $\Ga_2(e)$ for all $e=\{u,v\}\in E(\Sigma)$,
\end{proof}


\subsection{Proof of Theorem~{\rm\ref{thm:bary}}.}
Let $\Ga=\ssig$, let $t:=\lceil\frac{s+1}{2}\rceil$, and note that $2t-1=s$ if $s$ is odd, and $2t-2=s$ if $s$ is even.

Suppose that $\Sigma$ is $(G, t)$-arc transitive. We prove first that $\Ga$ is locally $(G,2t-1)$-arc transitive.
Let $\alpha=(u_0, e_0,$ $u_1, e_1, ..., u_{t-1}, e_{t-1})$ and $\alpha'=(u_0,
e'_0, u'_1, e'_1, ..., u'_{t-1}, e'_{t-1})$ be $(2t-1)$-arcs in $\Gamma$ with
initial vertex $u_0\in V(\Si)$. Thus $\hat\alpha:=(u_0, u_1,  ... , u_{t-1}, u_t)$
and $\hat{\alpha'}:=(u_0, u'_1, \cdots, u'_{t-1}, u'_t)$ are $t$-arcs in $\Sigma$
where $e_{t-1}=\{u_{t-1}, u_t\}$, $e'_{t-1}=\{u'_{t-1}, u'_t\}$. By assumption
there exists  $g\in G_{u_0}$ such that $\hat\alpha^g = \hat{\alpha'}$,  and hence $\alpha^g=\alpha'$. Thus  $G_{u_0}$ acts transitively on $(2t-1)$-arcs in $\Gamma$ starting with $u_0$. Now consider $(2t-1)$-arcs in $\Gamma$ of the form $\beta=(e_0, u_1, e_1, \ldots, u_{s-1}, e_{t-1}, u_t)$ and $\beta'=( e_0, u'_1, e'_1, \ldots, u'_{s-1}, e'_{t-1}, u'_t)$ with initial vertex $e_0\in E(\Si)$. Now $e_0=\{u_0,u_1\}=\{u_0',u_1'\}$, and $\beta, \beta'$ correspond to $t$-arcs $\hat\beta:=(u_0, u_1, \ldots, u_{t-1}, u_t)$ and $\hat{\beta'}:=(u_0', u'_1, \ldots, u'_{t-1}, u'_t)$ in $\Sigma$. Hence,  there exists $g\in G$ such that $\hat\beta^g =\hat{\beta'}$. The element $g$ fixes $e_0$ setwise and therefore satisfies $\beta^g=\beta'$.  Thus $\Gamma$ is locally $(G, 2t-1)$-arc  transitive. Since $2t-1\geqslant s$, we have that   $\Gamma$ is locally $(G, s)$-arc  transitive.

Conversely suppose  $\Ga$ is locally $(G,s)$-arc transitive. We prove that $\Si$ is $(G,t)$-arc transitive. Consider  two $t$-arcs $\alpha=(u_0, u_1, \ldots, u_t)$ and $\alpha'=(u_0, u'_1, \dots, u'_t)$  in $\Si$ with initial vertex $u_0$. The corresponding $(2t-1)$-arcs in $\Ga$ are $\hat\alpha:=(u_0, e_0, u_1, e_1, \ldots, u_{t-1}, e_{t-1})$ and $\hat\alpha':=(u_0, e'_0, u'_1, e'_1, \cdots, u'_{t-1}, e'_{t-1})$, where for $i<t$, $e_i=\{u_i,u_{i+1}\}$ and $e_i'=\{u_i',u_{i+1}'\}$, and $u_0'=u_0$. Suppose first that $s$ is odd (so that $s=2t-1$). Then  $\Gamma$ is locally $(G, 2t-1)$-arc  transitive so there exists $g\in G_{u_0}$ such that $\hat\alpha^g=\hat\alpha'$. Thus $e^g_{t-1}=e'_{t-1}$ and $u^g_{t-1}=u'_{t-1}$, and hence $u^g_t=u'_t$ and $\alpha^g=\alpha'$. Therefore $\Si$ is locally $(G, t)$-arc transitive. By Lemma \ref{lem3}, $G$ is vertex transitive on $\Si$, and hence $\Si$ is $(G, t)$-arc transitive.

Finally suppose that $s$ is even, so that $s=2t-2$ and $\Ga$ is locally $(G,2t-2)$-arc transitive. Note that $t\geqslant 2$ in this case. Then the $(2t-1)$-arcs of $\Gamma$ in the previous paragraph are of the form $\hat\alpha=(\beta,e_{t-1})$ and $\hat\alpha'=(\beta',e_{t-1}')$ with $\beta,\beta'$ both $(2t-2)$-arcs in $\Gamma$ with initial vertex $u_0$. Since $\Ga$ is locally $(G,2t-2)$-arc transitive, there exists $g\in G_{u_{0}}$ such that
$\beta^g=\beta'$. Thus $\alpha^g=(\beta',e_{t-1}^g)$ and $e_{t-1}^g
=\{u_{t-1}^g,u_t^g\}=\{u_{t-1}',u_t^g\}$. Now we also have two $(2t-2)$-arcs
$\gamma:=(e_{0}', u_{1}',e_{1}',\ldots,u_{t-1}',e_{t-1}^g)$  and
$\gamma':=(e_{0}', u_{1}',e_{1}',\ldots,u_{t-1}',e_{t-1}')$ in $\Ga$ with initial vertex $e'_0$, so there exists $h\in G_{e'_0}$ such that $\gamma^h=\gamma'$. Since $\{u_0,u'_1\}=e'_0={e'_0}^h=\{u_0^h,{u_1'}^h\}=\{u_0^h,u_1'\}$, we have $u_0^h=u_0$ and hence $\alpha^{gh}=\alpha'$, with $gh\in G_{u_0}$.  Therefore $\Si$ is locally $(G, t)$-arc transitive, and as in the previous paragraph, $\Si$ is $(G, t)$-arc transitive.  \hfill $\square$ 

\subsection{Some consequences of Theorem~\ref{thm:bary}.}

We show for the cases $s=2, 3$, how to link local $(G,s)$-distance transitivity of $\ssig$ with symmetry properties of $\Si$.

\begin{cor}\label{basic}
Suppose that Hypothesis~{\rm\ref{hyp}} holds, and $\Si\ne K_2$. Then the following four conditions are equivalent.
\begin{enumerate}
\item[(a)]  $\ssig$ is locally $(G, 2)$-distance transitive.
\item[(b)] $\Sigma$ is $(G, 2)$-arc transitive.
\item[(c)] $\ssig$ is locally $(G, 3)$-arc  transitive.
\item[(d)] $\ssig$ is locally $(G, 3)$-distance transitive.
\end{enumerate}
\end{cor}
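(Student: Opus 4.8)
The plan is to prove the four conditions equivalent by establishing the cycle of implications $(a)\Rightarrow(b)\Rightarrow(c)\Rightarrow(d)\Rightarrow(a)$, drawing on the distance formula of Remark~\ref{prop-main1}, on Lemma~\ref{lem3}, and on Theorem~\ref{thm:bary}. Throughout I write $\Ga=\ssig$, so that $V(\Ga)=V(\Si)\cup E(\Si)$, every vertex of $E(\Si)$ has valency $2$ in $\Ga$, and the two bipartite halves must be handled separately.

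For $(b)\Rightarrow(c)$ there is essentially nothing to do beyond quoting Theorem~\ref{thm:bary} with $s=3$: since $\lceil\frac{3+1}{2}\rceil=2$, local $(G,3)$-arc transitivity of $\Ga$ is exactly equivalent to $(G,2)$-arc transitivity of $\Si$. Step $(d)\Rightarrow(a)$ is the monotonicity noted after Table~\ref{def:localtrans}, namely that local $(G,3)$-distance transitivity implies local $(G,2)$-distance transitivity; here the relevant sets $\Ga_3(\cdot)$ are nonempty because $\Si\ne K_2$ supplies a $2$-arc of $\Si$, hence a $3$-arc of $\Ga$.

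Step $(c)\Rightarrow(d)$ is where I would spell out a short argument rather than invoke a general principle, because local arc transitivity does not in general force local distance transitivity. The key observation is that in $\Ga$, for each $i\in\{1,2,3\}$ and each $\alpha\in V(\Ga)$, every vertex of $\Ga_i(\alpha)$ is the terminal vertex of some $i$-arc of $\Ga$ beginning at $\alpha$. I would verify this by explicit construction, reading off from Remark~\ref{prop-main1} exactly what $\Ga_i(\alpha)$ is: for instance, for $\alpha=v\in V(\Si)$ the set $\Ga_3(v)$ consists of the edges of $\Si$ incident to a neighbour of $v$ but not to $v$ itself, and such an edge $\{u_1,u_2\}$ is reached by the $3$-arc $(v,\{v,u_1\},u_1,\{u_1,u_2\})$; the cases $i=1,2$ and the symmetric treatment of $\alpha=e\in E(\Si)$ are analogous and one checks the non-backtracking condition in each case. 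Granting this surjectivity of the arc-endpoint map, transitivity of $G_\alpha$ on the $i$-arcs from $\alpha$ forces transitivity on $\Ga_i(\alpha)$, since any two targets are endpoints of $i$-arcs matched up by some element of $G_\alpha$.

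The substantive step, and the main obstacle, is $(a)\Rightarrow(b)$. Assuming local $(G,2)$-distance transitivity, I would first apply it at the edge-vertices: $G_e$ is transitive on $\Ga_1(e)$ and $\Ga_2(e)$ for every $e\in E(\Si)$, so Lemma~\ref{lem3}(b) already yields that either $\Si$ is $(G,2)$-arc transitive, which is $(b)$, or else $\Si=C_n$ with $n$ even and $G\cong D_n$ acting with two edge-orbits. To eliminate this exceptional case I would instead exploit the hypothesis at the vertices of $\Si$: local $(G,2)$-distance transitivity also gives $G_v$ transitive on $\Ga_1(v)$, that is, on the two edges of $C_n$ incident to $v$. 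However, transitivity of all the $G_e$ on $\Ga_1(e)$ forces $G$ to be vertex-transitive on $C_n$ by Lemma~\ref{lem3}(a), so $|G_v|=|G|/n=1$; a trivial stabiliser cannot be transitive on a two-element set, a contradiction. Hence the exceptional case does not arise and $\Si$ is $(G,2)$-arc transitive, closing the cycle.
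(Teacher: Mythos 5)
Your proof is correct and follows essentially the same route as the paper's: both reduce $(b)\Leftrightarrow(c)$ to Theorem~\ref{thm:bary} with $s=3$, obtain $(c)\Rightarrow(d)\Rightarrow(a)$ from the definitions (geodesics in $\ssig$ are arcs), and prove $(a)\Rightarrow(b)$ via Lemma~\ref{lem3}(b), killing the exceptional case $\Si=C_n$, $G\cong D_n$ by noting that vertex-transitivity forces $G_v=1$, which cannot act transitively on the two edges through $v$. The only difference is presentational: you arrange the steps as a cycle and spell out details (the arc-endpoint surjectivity in $(c)\Rightarrow(d)$, the orbit-stabiliser count) that the paper dismisses as immediate.
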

Since  $\Si\ne K_2$, $\Si$ contains a $2$-arc , so (b) is well-defined, and $\diam(\ssig)\geqslant 3$ by Remark~\ref{prop-main1}, so (a), (c) and (d) are well-defined.
\begin{proof}
By Theorem~\ref{thm:bary} for $s=3$, conditions (b) and (c) are equivalent.
It follows easily from the definition of local $(G,3)$-distance transitivity that condition (c) implies condition (d).
Also, by definition, condition (a) follows from condition (d). Thus it is sufficient to prove that condition (a) implies condition (b).

Let $\Gamma=\ssig$, and suppose that $\Gamma$ is locally $(G, 2)$-distance transitive. In particular then, for all $e\in E(\Si)$, $G_e$ is transitive on $\Gamma_1(e)$ and  $\Gamma_2(e)$, so by Lemma  \ref{lem3}(b), $\Sigma$ is $(G, 2)$-arc transitive,
or $\Si=C_n$ and $G=D_{n}$ with two edge orbits in $E(\Si)$. However in the latter case, condition (a) does not hold since $G_u=1$ for $u\in V(\Si)$.
\end{proof}

\begin{rem}\label{lem10}
Typically, a graph $\Si$ for which one of the equivalent conditions of Corollary~\ref{basic} holds will have girth at least $4$. Otherwise $\girth(\Si)=3$, and since $G$ is transitive on the $2$-arcs of $\Si$, all $2$-arcs form a $3$-cycle, and it follows that $\Si = K_n$ for some $n$. As $\Si\ne K_2$, $n\geqslant3$ and $G$ is a $3$-transitive subgroup of $S_n$.
\end{rem}

We finish this subsection with a result about local $(G, 4)$-distance transitivity when the girth of $\Si$ is greater than $4$.

\begin{prop}\label{3-arc}
Suppose that Hypothesis~{\rm\ref{hyp}} holds, and $\girth(\Si)\geqslant 5$. Then $\diam(\ssig)\geqslant 5$, and if $\ssig$ is locally $(G, 4)$-distance transitive, then $\Si$ is $(G,3)$-arc transitive.
\end{prop}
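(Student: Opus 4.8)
The plan is to reduce everything to Theorem~\ref{thm:bary} by exploiting the fact that subdividing a graph of girth $\geqslant 5$ produces a graph $\Ga:=\ssig$ of very large girth, in which short arcs and geodesics coincide.

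First I would settle the diameter claim. Since $\girth(\Si)\geqslant 5$, I would choose four consecutive vertices $v_0,v_1,v_2,v_3$ on a shortest cycle of $\Si$ (they are distinct because the cycle has length $\geqslant 5$) and consider the edge $e=\{v_0,v_1\}$ together with the vertex $v_3$. The edge $\{v_1,v_3\}$ cannot occur (it would close a triangle $v_1v_2v_3$) and neither can $\{v_0,v_3\}$ (it would close a $4$-cycle $v_0v_1v_2v_3$), so $d_\Si(v_3,v_1)=2$ and $d_\Si(v_3,v_0)\geqslant 2$. Feeding this into the vertex--edge case of the distance formula in Remark~\ref{prop-main1}(a) gives $d_\Ga(v_3,e)=2\min\{d_\Si(v_3,v_0),d_\Si(v_3,v_1)\}+1=5$, whence $\diam(\Ga)\geqslant 5$ and local $(G,4)$-distance transitivity is well defined.

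For the main implication, the key observation is that every cycle of $\Ga$ is a subdivided cycle of $\Si$, hence of even length $2\ell$ with $\ell\geqslant\girth(\Si)\geqslant 5$; thus $\girth(\Ga)\geqslant 10$. From this I would derive two elementary facts in $\Ga$: (i) every arc of length at most $5$ is a geodesic, and (ii) any two vertices at distance at most $4$ are joined by a unique geodesic. Both follow from the standard observation that a second path between two vertices, together with the first, contains a cycle of length at most $5+4=9$ (for (i)) or $4+4=8$ (for (ii)), contradicting $\girth(\Ga)\geqslant 10$. Facts (i) and (ii) show that, for each $v\in V(\Ga)$ and each $i\leqslant 4$, sending an $i$-arc from $v$ to its terminal vertex is a $G_v$-equivariant bijection from the $i$-arcs starting at $v$ onto $\Ga_i(v)$. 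Consequently $G_v$ is transitive on $\Ga_i(v)$ if and only if it is transitive on the $i$-arcs from $v$, for every $i\leqslant 4$; that is, $\Ga$ is locally $(G,4)$-distance transitive if and only if it is locally $(G,4)$-arc transitive.

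It then remains to apply Theorem~\ref{thm:bary} with $s=4$, for which $\lceil\frac{s+1}{2}\rceil=3$: $\Ga$ is locally $(G,4)$-arc transitive if and only if $\Si$ is $(G,3)$-arc transitive. Combining this with the previous equivalence yields the stated implication (in fact an equivalence). The one step needing care is the girth bookkeeping of the third paragraph: one must verify that the bound $\girth(\Ga)\geqslant 10$ is exactly strong enough to force the arc/geodesic correspondence up to length $4$ — the inequality $5+4<10$ is precisely what makes $i\leqslant 4$ work and would break down at $i=5$ — and that this correspondence is genuinely $G_v$-equivariant, so that transitivity transfers level by level.
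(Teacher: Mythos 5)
Your proposal is correct, but it takes a genuinely different route from the paper's own proof. The diameter claim is handled the same way in both (a chord-free shortest cycle plus the distance formula of Remark~\ref{prop-main1}(a)). For the main implication, however, the paper argues by hand: it invokes Corollary~\ref{basic} to get $(G,2)$-arc transitivity of $\Si$, maps the initial $2$-arcs of the two given $3$-arcs onto each other, builds two $4$-arcs of $\Ga=\ssig$ based at the edge $e'_0$, checks via $\girth(\Si)\geqslant 5$ that their terminal vertices lie in $\Ga_4(e'_0)$, and then uses the girth hypothesis twice more to rule out the bad ways the element $b\in G_{e'_0}$ supplied by local $(G,4)$-distance transitivity could move $u'_0,u'_1,u'_2$. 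You instead prove a structural equivalence: since every cycle of $\ssig$ is a subdivided cycle of $\Si$, $\girth(\ssig)\geqslant 10$, so for $i\leqslant 4$ the $i$-arcs issuing from a vertex $v$ are in $G_v$-equivariant bijection with $\Ga_i(v)$; hence local $(G,4)$-distance transitivity and local $(G,4)$-arc transitivity of $\ssig$ coincide, and Theorem~\ref{thm:bary} with $s=4$ finishes the argument. Your route is more modular: it reuses Theorem~\ref{thm:bary} rather than re-deriving arc transitivity by explicit arc manipulation, it isolates a reusable principle (below roughly half the girth, local distance transitivity and local arc transitivity are the same thing), and it actually delivers the converse implication as well, which the proposition does not claim. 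The paper's argument, in exchange, stays entirely inside $\Si$-girth considerations and never needs geodesic uniqueness in $\ssig$. One small point you should make explicit: before applying the ``two distinct paths contain a short cycle'' observation in your fact (i), you must know the arc in question is a path; this follows from the same girth bound, since a repeated vertex on a non-backtracking walk of length at most $5$ would yield a cycle of length at most $5<10$ in $\ssig$.
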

\begin{proof}
Let $\Ga=\ssig$.
Since $g=\girth(\Si)\geqslant 5$, $\Si$ contains a minimal cycle $(v_0,v_1,\ldots, v_{g-1})$, and so $d_\Ga(v_0,\{v_2,v_3\})=5$, thus $\diam(\Ga)\geqslant 5$.
Let $(u_0, u_1, u_2, u_3)$ and $(u_0', u'_1, u'_2, u'_3)$ be two $3$-arcs in $\Si$. Since $\Ga$ is locally $(G, 4)$-distance transitive
it follows from Corollary~\ref{basic} that
$\Si$ is $(G, 2)$-arc transitive. Thus there exists $a\in G$ such that $(u_0, u_1, u_2)^a=(u_0', u'_1, u'_2)$. Now we have two $4$-arcs  $\beta=(e'_0, u'_1, e'_1, u'_2, f)$ and $\beta'=(e'_0, u'_1, e'_1, u'_2, e'_2)$ in $\ssig$, where  $e'_i=\{u'_i, u'_{i+1}\}$ for $0\leqslant i\leqslant 2$ and $f=\{u'_2, u_3^a\}$. Thus $d_\Ga(e_0',f)\leqslant 4$. Also $e_0'\ne f$ since $u_2'\in f$. If $d_\Ga(e_0',f)=2$ then $e_0\cap f\ne\emptyset$, which is impossible since $g\geqslant5$. Hence $d_\Ga(e_0',f)=4$ and similarly $d_\Ga(e_0',e_2')=4$.
By the local $(G, 4)$-distance transitivity of $\Ga$, there exists $b\in G_{e'_0}$ such that $f^{b}=e_2'$.
If $b\in G_{e'_0}\setminus G_{u_0'}$, then
${u_0'}^{b}=u'_1,\ {u'_1}^{b}=u_0'$. Also ${u'_2}^{b}={u'}_2$ or ${u'_2}^{b}={u'}_3$, and
so the edge $\{u'_1,u'_2\}$ is mapped by $b$ onto $\{u'_0,u'_2\}$ or $\{u'_0,u'_3\}$ respectively, however those cannot be edges, as $g\geqslant 5$.
Thus $b\in G_{u_0',u_1'}$ and $f^{b}=e_2'$. If ${u_2'}^{b}=u_3'$, then
$\{u'_1,u'_2\}^{b}=\{u'_1,u'_3\}$ would be an edge,
again a contradiction. Hence ${u_2'}^{b}=u_2',\ {(u_3^{a})}^{b}=u_3'$, and we have that  $(u_0, u_1, u_2, u_3)^{ab}=(u_0', u'_1, u'_2, u'_3)$. Thus $\Si$ is $(G,3)$-arc transitive.
\end{proof}

\subsection{Proof of Theorem~\ref{main3}.}

The following Lemma \ref{lem-girth} is a critical ingredient in the proof of Theorem~\ref{main3}.

\begin{lem}\label{lem-girth}
Suppose that Hypothesis~{\rm\ref{hyp}} holds, and that $s$ is an even positive integer satisfying $s\leqslant 2d-1$. If  $\S(\Si)$ is  locally $(G, s)$-distance transitive, then $\girth(\Si)\geqslant s+2$.
\end{lem}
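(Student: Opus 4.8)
The plan is to write $\Ga:=\ssig$ and argue by contradiction, assuming $g:=\girth(\Si)\leqslant s+1$. Since $s$ is even and positive we have $s\geqslant2$, and $\Si\ne K_2$ (else $d=1$ forces $s\leqslant 2d-1=1$, contradicting $s\geqslant2$). Hence Corollary~\ref{basic} applies and $\Si$ is $(G,2)$-arc transitive, so $G$ is transitive on $V(\Si)$ and on $E(\Si)$, and $\Si$ is regular. If $\Si=C_n$ then $d=\lfloor n/2\rfloor$, and a check of both parities of $n$ shows that $s\leqslant 2d-1$ with $s$ even forces $n\geqslant s+2$; so I may assume $\Si$ has valency at least $3$. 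The basic device throughout is: for $1\leqslant i\leqslant s$ and $w\in V(\Ga)$, transitivity of $G_w$ on $\Ga_i(w)$ forces the number $c_i$ of neighbours in $\Ga_{i-1}(w)$ of a vertex of $\Ga_i(w)$ to be constant on $\Ga_i(w)$. As $\Ga$ is bipartite with biparts $V(\Si),E(\Si)$, each sphere $\Ga_i(w)$ lies in a single bipart, and every vertex of $E(\Si)$ has valency $2$, so a vertex of $\Ga_i(w)\cap E(\Si)$ has $c_i\in\{1,2\}$.

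Suppose first $g\leqslant s$. I locate a valency-$2$ vertex of $\Ga$ at distance exactly $g$ from a well-chosen base $w$ on a (geodesic) girth cycle $(v_0,\dots,v_{g-1})$, both of whose $\Ga$-neighbours lie in $\Ga_{g-1}(w)$. If $g=2m+1$ is odd, take $w=v_0$: the edge $\{v_m,v_{m+1}\}$ is at $\Ga$-distance $g$ from $v_0$ with both endpoints at $\Si$-distance $m$, so both its neighbours lie in $\Ga_{g-1}(v_0)$, giving $c_g=2$. If $g=2m$ is even (whence $g\leqslant s$, as $s$ is even), take $w=\{v_0,v_1\}$: the antipodal edge $\{v_m,v_{m+1}\}$ is at $\Ga$-distance $g$ with both neighbours in $\Ga_{g-1}(w)$, again $c_g=2$. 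Since $g\leqslant s$, constancy of $c_g$ forces every vertex of $\Ga_g(w)$ (all of which lie in $E(\Si)$) to have $c_g=2$, i.e. both neighbours closer; hence $\Ga_{g+1}(w)=\emptyset$ and $w$ has $\Ga$-eccentricity $g$. Transitivity of $G$ on the bipart containing $w$ spreads this to all its members. Using Remark~\ref{prop-main1} (two $\Si$-vertices at distance $d$ are at $\Ga$-distance $2d$) and that adjacent vertices of $\Ga$ have eccentricities differing by at most $1$, I obtain $2d\leqslant g+1\leqslant s+1\leqslant 2d$; thus $2d=s+1$, impossible as $2d$ is even and $s+1$ odd. (For odd $g$ one gets $2d\leqslant g\leqslant s-1<2d$ even more directly.)

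It remains to exclude $g=s+1$ (odd). By the previous step $\Si$ has no cycle of length $\leqslant s$, so $\girth(\Ga)=2g=2s+2>2s$ and every ball of radius $\leqslant s$ in $\Ga$ is a tree. Consequently, for $i\leqslant s$ the $i$-arcs of $\Ga$ from $w$ are exactly the geodesics of length $i$ from $w$, and the endpoint map is a bijection onto $\Ga_i(w)$; transitivity of $G_w$ on $\Ga_i(w)$ therefore upgrades to transitivity on $i$-arcs, so $\Ga$ is locally $(G,s)$-arc transitive. By Theorem~\ref{thm:bary}, $\Si$ is $(G,t)$-arc transitive with $t=\lceil\frac{s+1}{2}\rceil=\frac{s}{2}+1$, and $g=2t-1$. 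But the $t$-arc $(v_0,\dots,v_t)$ along the girth cycle has endpoints at $\Si$-distance $t-1$, whereas (since $t\leqslant d$, as $s\leqslant 2d-1$) an initial segment of a diametral geodesic gives a $t$-arc with endpoints at distance $t$. No element of $G\leqslant\Aut(\Si)$ can map one to the other, contradicting $(G,t)$-arc transitivity.

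The main obstacle is precisely this last case $g=s+1$: for girth at most $s$ the short cycle produces a valency-$2$ ``dead end'' inside the radius-$s$ ball, exactly where distance-transitivity bites; but when $g=s+1$ the whole radius-$s$ ball is a tree and no such local obstruction survives. The decisive extra input there is the girth-forced equality $\girth(\Ga)=2s+2>2s$, which lets me trade local distance- for local arc-transitivity and invoke Theorem~\ref{thm:bary}; the contradiction then comes from the incompatibility of the odd girth $2t-1$ with $(G,t)$-arc transitivity via the geodesic-versus-wrapping dichotomy of $t$-arcs. Care is needed to first remove the cycles $\Si=C_n$ (where both $t$-arc types need not coexist) by the elementary diameter arithmetic, and to check the boundary distance $i=s$ in the tree-ball bijection, where $2i=2s$ is still strictly below $\girth(\Ga)=2s+2$.
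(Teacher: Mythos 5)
Your proof is correct, but it takes a genuinely different route from the paper's. The paper argues by induction on $t=s/2$: the base case comes from Corollary~\ref{basic} (since $d\geqslant 2$, $(G,2)$-arc transitivity forces girth at least $4$), and the inductive step, knowing $\girth(\Si)\geqslant 2t$, excludes girth $2t$ and $2t+1$ in one stroke: a geodesic path of length $t+1$ from $u_0$ (it exists since $d\geqslant t+1$) and a girth cycle through $u_0$ have, after aligning their first edges via local $(G,2)$-distance transitivity, terminal edges at the same $\Ga$-distance $2t$ from the common initial edge $e$; the element of $G_e$ carrying one to the other would put two vertices at $\Si$-distance $t+1$ on a cycle of length at most $2t+1$, which is absurd. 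You avoid induction entirely and instead split by the size of the girth: for $g\leqslant s$ you run a sphere-regularity (``intersection number'') argument -- a girth cycle manufactures a valency-$2$ vertex at distance $g$ whose neighbours both lie closer, so transitivity of the stabiliser on the distance-$g$ sphere kills the sphere at distance $g+1$, and the resulting eccentricity bounds clash with $2d\geqslant s+1$ (via parity); for $g=s+1$ you observe $\girth(\Ga)=2s+2>2s$, so arcs of length at most $s$ biject with sphere points, local distance transitivity upgrades to local arc transitivity, and Theorem~\ref{thm:bary} yields $(G,t)$-arc transitivity of $\Si$, contradicted by the coexistence of $t$-arcs with endpoint distances $t-1$ (along the girth cycle) and $t$ (along a diametral geodesic, using $t\leqslant d$). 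Your treatment is longer but modular and makes explicit why $g=s+1$ is the delicate case; it also reuses Theorem~\ref{thm:bary}, which the paper's proof of this lemma does not invoke, whereas the paper's induction is more compact and needs only local distance transitivity at levels $2$ and $s$. Both your case analyses check out, including the boundary uses of transitivity at distance exactly $g\leqslant s$ and the parity bookkeeping, and your preliminary disposal of $\Si=C_n$ and of $K_2$ is sound (indeed cycles are already excluded by $s\leqslant 2d-1$, so that step is harmless extra care).
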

\begin{proof}
Let $\Ga=\ssig$, $s=2t$, and suppose that $\Ga$ is  locally $(G, 2t)$-distance transitive. By assumption $d\geqslant t+1\geqslant2$. Note that, by Lemma~\ref{lem3}, $G$ is transitive on $V(\Si)$. We prove the lemma by induction on $t$. If $t=1$, then $\Sigma$ is $(G,2)$-arc transitive by Corollary~\ref{basic}, and since $d\geqslant 2$, some $2$-arc does not lie in a $3$-cycle. Hence no $2$-arcs lie in a $3$-cycle and  $\girth(\Si)\geqslant4$.

Suppose now that $t\geqslant 2$ and the result holds for $t-1$. Then the conditions of the lemma hold for $t-1$, and so, by induction,   $\girth(\Si)\geqslant 2t$. We must show that $g:=\girth(\Si)\neq 2t, 2t+1$. Assume to the contrary that $g=2t$ or $2t+1$. Then $\Si$ contains a cycle $c=(u_0, u'_1, \cdots, u'_{g-1})$ of length $g$, and a vertex $v\in V(\Si)$ such that $d_{\Si}(u_0, v)=t+1$. There is a path  $p=(u_0, u_1, \cdots, u_t, u_{t+1})$
in $\Si$ with $u_{t+1}=v$. Since $\Ga$ is locally $(G, 2)$-distance transitive, there exists $a\in G_{u_0}$, such that $\{u_0, u_1\}^a=\{u_0, u'_1\}=e$, say. So $d_{\Si}(u_0, v^a)=d_{\Si}(u_0, v)=t+1$. Setting $f=\{u^a_t, v^a\}$ and $f'=\{u'_t, u'_{t+1}\}$, we have $d_{\Ga}(e, f)=d_{\Ga}(e, f')=2t$ since  $p$ is a shortest path from $u_0$ to $v$ and $c$ is a shortest cycle in $\Si$. Then by the  local $(G, 2t)$-distance transitivity of $\Ga$, there exists $b\in G_e$ such that $f^b=f'$. Since $b$ fixes $e$ setwise, ${u_0}^b\in \{u_0, u'_1\}$. Also $t+1=d_{\Si}(u_0, v^a)=d_{\Si}({u_0}^b, v^{ab})$ and $v^{ab}\in f'$.    Therefore  $u_0^b$ and $v^{ab}$ are vertices in $c$ at distance $t+1$ in $\Si$, which is a contradiction. Hence $g\geqslant 2t+2$. \end{proof}

\noindent
\emph{Proof of Theorem~\ref{main3}.}\quad If $s=1$ then the claimed equivalence follows from Lemma~\ref{lem-smalls} and Theorem~\ref{thm:bary} (with $s=1$). Also, if $s=2$ or 3, then the equivalence follows from Corollary~\ref{basic}. Thus we may assume that  $s\geqslant4$. Let $s'$ be the largest even integer $s'\leqslant s$, and let $t:=\lceil\frac{s+1}{2}\rceil =\frac{s'}{2}+1$. Note that $t\geqslant3$.

Let $\Ga=\ssig$ and suppose that $\Ga$ is locally $(G,s)$-distance transitive.
Then by Lemma~\ref{lem-girth},  $\girth(\Si)\geqslant s'+2\geqslant s+1$. Consider
two $t$-arcs $\alpha=(u_0, u_1, \cdots, u_t)$ and $\alpha'=(u_0, u'_1, \cdots, u'_t)$ of $\Si$. These correspond to two $(s'+2)$-arcs $\beta=(u_0, e_0, u_1, e_1 \cdots,\ u_{t-1}, e_{t-1},u_t)$ and $\beta'=(u_0, e'_0, u'_1, e'_1 \cdots, u'_{t-1},$ $e'_{t-1},u'_t)$ of  $\Ga$, where $u_0=u'_0$, $e_i=\{u_i, u_{i+1}\}$ and $e'_i=\{u'_i, u'_{i+1}\}$ for each $i<t$. Now $\Ga$ is  locally $(G, 2)$-distance transitive and $u_1,u'_1\in\Ga_2(u_0)$, so there exists $a\in G_{u_0}$ such that $u_1^a=u'_1$. Therefore $e_0^a=e'_0$ and $\beta^a=(u_0, e'_0, u'_1, e^a_1 \cdots, {u}_{t-1}^{a}, {e}_{t-1}^{a},u_t^a)$.
For each $r<t$ the initial $(2r+1)$-arc $(u_0, e_0,$ $u_1, e_1 \cdots, u_r, e_r)$ of $\beta$ is a path in $\Ga$ of length $2r+1$ and hence  $d_{\Ga}(u_0, e_r)\leqslant2r+1$. If there were a shorter path in $\Ga$ from $u_0$ to $e_r$, then we would obtain a cycle in $\Ga$ of length at most $(2r+1)+(2r-1)=4r$, corresponding to a cycle in $\Si$ of length at most $2r\leqslant 2t-2 = s' < \girth(\Si)$, which is a contradiction.
Hence $d_{\Ga}(u_0, e_r)=2r+1$ for each $r<t$. Thus $d_{\Ga}(u_0, {e_{t-1}}^a)=d_{\Ga}(u_0, e_{t-1})=2t-1$, and it follows that
$d_{\Ga}(e'_0, {e_{t-1}}^a)=d_{\Ga}(e_0, e_{t-1})=2t-2$ and also
$d_{\Ga}(e'_0, e'_{t-1})=2t-2$ for similar reasons. Since $2t-2=s'\leqslant s$, the graph $\Ga$ is
locally $(G, s')$-distance transitive, so there exists $b\in G_{ e'_0}$  such that  ${e_{t-1}}^{ab}=e'_{t-1}$. The element $b$ fixes $u_0$ and $u'_1$, since otherwise we would have $2t-1=d_{\Ga}(u_0, {e_{t-1}}^a)=d_{\Ga}({u_0}^b, {e_{t-1}}^{ab})=d_{\Ga}(u'_1, e'_{t-1})=2t-3$, which is a contradiction. Hence $b\in G_{u_{0}}$, and so  $ab\in G_{u_{0}}$. Similarly, since  $d_{\Ga}(u_0, e_r)=2r+1$, for all $r< t$, we see that $b$ maps each ${u_r}^a$ to $u'_r$ and ${e_r}^a$ to $e'_r$. Thus $b$ also maps  ${u_t}^a$ to $u'_t$, and so $\alpha^{ab}=\alpha'$. Thus $\Si$ is locally $(G, t)$-arc transitive. Since, by Lemma \ref{lem3},
 $\Si$ is $G$-vertex transitive, $\Si$ is $(G, t)$-arc transitive.

Conversely suppose that $\Si$ is $(G,t)$-arc transitive. Then by Theorem~\ref{thm:bary}, $\Ga$ is locally $(G,s)$-arc transitive. Let $r\leqslant s$ and let $x,y,y'\in V\Ga$ with $d_\Ga(x,y)=d_\Ga(x,y')=r$. Then there are $r$-arcs $\beta$ and $\beta'$ in $\Ga$ from $x$ to $y$ and $y'$ respectively. Since $r\leqslant s$, $G_x$ is transitive on the $r$-arcs with initial vertex $x$, and so there exists $g\in G_x$ such that $\beta^g=\beta'$, and hence $y^g=y'$. Thus $\Ga$ is locally $(G,s)$-distance transitive. \hfill $\square$

\section{Local distance transitivity of $\ssig$ for $\Sigma$ in Table~\ref{table}}\label{sec-examples}


Suppose that Hypothesis~{\rm\ref{hyp}} holds, for $G, \Si, d, \delta$, and that $s\leqslant \diam(\ssig)$. 

\subsection{$\Sigma=K_{n}$ with $n\geqslant 2$}\label{ex-K23}

We easily see that $\diam(\Sigma)=1$, $\Aut(\Sigma)=S_{n}$ and $\diam(\ssig)=\min\{n,4\}$.

For $n=2,3$, the graph $\ssig$ is locally $(G,s)$-distance transitive if and only if $G=S_n$ if and only if $\ssig$ is locally $G$-distance transitive.
\begin{prop}\label{ex-Kn}
Let $\Sigma=K_n$ with $n\geqslant 4$. Then the graph $\ssig$ is locally $(G,s)$-distance transitive if and only if $G$ and $s$ are as in Table~\ref{tbl:kn}. 
\end{prop}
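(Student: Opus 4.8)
The plan is to determine, for $\Sigma = K_n$ with $n\geqslant 4$, exactly which groups $G\leqslant S_n$ and which values of $s$ (in the relevant range $2d\leqslant s\leqslant 2d+\delta$, i.e. $2\leqslant s\leqslant 4$ since $d=1$ and $\delta=2$) make $\ssig$ locally $(G,s)$-distance transitive, producing the table referenced as \texttt{tbl:kn}. The overall strategy is to translate each local distance-transitivity condition on $\Gamma:=\ssig$ into a transitivity condition on $G$ acting on $V(\Sigma)=\{1,\dots,n\}$, using the distance formulas of Remark~\ref{prop-main1} together with the earlier reductions. Since $\diam(\Gamma)=4$ here, the cases to handle are $s=2,3,4$ (the case $s=1$ being already covered by Lemma~\ref{lem-smalls}).

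First I would dispose of $s=2,3$ quickly. By Corollary~\ref{basic}, local $(G,2)$-distance transitivity, local $(G,3)$-distance transitivity, and $(G,2)$-arc transitivity of $\Sigma$ are all equivalent (the exceptional cycle case of Lemma~\ref{lem3}(b) is excluded since $\Sigma=K_n\neq C_n$ for $n\geqslant 4$). For $\Sigma=K_n$, being $(G,2)$-arc transitive means $G_u$ is $2$-transitive on $\Sigma_1(u)=\{1,\dots,n\}\setminus\{u\}$ for each $u$, which by the standard correspondence is equivalent to $G$ being $3$-transitive on $\{1,\dots,n\}$ (cf.\ Remark~\ref{lem10}). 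So for $s=2$ or $3$ the condition is precisely that $G$ is a $3$-transitive subgroup of $S_n$.

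The substantive case is $s=4=\diam(\Gamma)$, where local $(G,4)$-distance transitivity coincides with local $G$-distance transitivity. Here I would first note that local $(G,4)$-distance transitivity implies local $(G,3)$-distance transitivity, so $G$ is already $3$-transitive; the new content is transitivity of each vertex stabiliser on the relevant distance class at distance $4$. Using Remark~\ref{prop-main1}(a), two vertices of $\Gamma$ at distance $4$ from a fixed $u_0\in V(\Sigma)$ are edges $e=\{x,y\},\ f=\{x',y'\}$ of $K_n$ with $u_0\notin e\cup f$ (so that $d_{\Sigma}$-values all equal $1=\diam(\Sigma)$, forcing distance $4$ in $\Gamma$ by the third line of the formula). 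I would show that $G_{u_0}$ is transitive on the set of such pairs of disjoint-from-$u_0$ edges. Unpacking this, $\Gamma_4(u_0)$ is naturally identified with the set of $2$-subsets of $\{1,\dots,n\}\setminus\{u_0\}$, and transitivity of $G_{u_0}$ on this set, for every $u_0$, is what must be characterised. The expected answer is that $G$ must be $4$-transitive (so that $G_{u_0}$ is $3$-transitive, hence certainly transitive on $2$-subsets), with the one sporadic exception $n=9$, $G=\mathrm{P\Gamma L}(2,8)$, a group which is $3$-transitive but not $4$-transitive yet whose point stabiliser still acts transitively on $2$-subsets of the remaining $8$ points — matching the $s=4$ row of Table~\ref{table}.

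The main obstacle will be the $s=4$ classification: I must determine exactly which $3$-transitive groups $G$ have the property that each point stabiliser $G_{u_0}$ is transitive on $2$-subsets of the complementary point set. This is a group-theoretic question requiring the classification of $3$-transitive (equivalently, via the reduction, the relevant $2$-homogeneous point-stabiliser) permutation groups, and hence invokes the finite simple group classification. The careful step is verifying that the only $3$-transitive but not $4$-transitive group surviving the point-stabiliser condition is $\mathrm{P\Gamma L}(2,8)$ on $9$ points; I would argue that $4$-transitivity of $G$ is clearly sufficient (the stabiliser is $3$-transitive, so transitive on ordered and hence unordered pairs), then scan the list of $3$-transitive groups that are not $4$-transitive to check which retain the weaker stabiliser condition, isolating $\mathrm{P\Gamma L}(2,8)$ as the sole extra case. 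Finally I would assemble these equivalences into the statement of Table~\ref{tbl:kn}, recording for each $s\in\{2,3,4\}$ the corresponding transitivity requirement on $G$.
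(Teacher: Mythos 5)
Your reduction for $s=1,2,3$ matches the paper's (Theorem~\ref{main3} and Corollary~\ref{basic}, giving $2$- resp.\ $3$-transitivity of $G$). But your treatment of $s=4$ contains a genuine error: you misidentify the distance-$4$ class. The graph $\Ga=\ssig$ is bipartite with biparts $V(\Si)$ and $E(\Si)$, so by Remark~\ref{prop-main1}(a) a vertex $u_0\in V(\Si)$ is at \emph{odd} distance from every vertex of $E(\Si)$; an edge $f$ of $K_n$ with $u_0\notin f$ is at distance $3$ from $u_0$, not $4$. In fact $\Ga_4(u_0)=\emptyset$ for every $u_0\in V(\Si)$, since all pairs of vertices of $K_n$ are at $\Si$-distance $1$ and hence at $\Ga$-distance $2$. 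The distance-$4$ condition therefore lives entirely on the other bipart: what must hold is that $G_e$ is transitive on $\Ga_4(e)=\{f\in E(\Si)\mid f\cap e=\emptyset\}$ for each $e\in E(\Si)$.

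This is not a cosmetic slip, because the condition you extract instead --- that $G_{u_0}$ be transitive on the $2$-subsets of $\{1,\dots,n\}\setminus\{u_0\}$ --- is automatically implied by $3$-transitivity of $G$ (the point stabiliser is then $2$-transitive on the remaining points, hence transitive on unordered pairs), so it adds nothing beyond the $s=2,3$ case and would wrongly place \emph{every} $3$-transitive group (for instance $\PGL(2,q)$ on $q+1$ points) in the $s=4$ row. In particular your claim that $\PGaL(2,8)$ is distinguished among $3$-transitive groups by this stabiliser property is false: all $3$-transitive groups have it. The paper's proof instead derives from the edge-stabiliser condition that $G$ is transitive on $4$-subsets of $V(\Si)$ (combine transitivity on $2$-subsets with transitivity of $G_e$ on the edges disjoint from $e$), and then invokes the classification of $4$-homogeneous groups \cite[Theorem 9.4B]{Dixon96}: either $G$ is $4$-transitive, or $n=9$ with $G=\PGL(2,8)$ or $\PGaL(2,8)$, or $n=33$ with $G=\PGaL(2,32)$. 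The last two spurious candidates are eliminated because $|\Ga_4(e)|$ does not divide $|G_e|$, and the case $n=9$, $G=\PGaL(2,8)$ is confirmed using \cite[p.6]{Atlas}. As written, your proposal neither formulates the correct edge condition nor reaches the $4$-homogeneity step, so both directions of the claimed $s=4$ equivalence fail.
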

\begin{table}[h]
\begin{center}
\caption{$(G,s)$-distance transitivity for $\Si=K_n$ and $n\geq 4$.}\label{tbl:kn}\smallskip
\begin{tabular}{l|l}
\hline
$s$& Conditions on $G$\\ \hline
$1$ & $2$-transitive on $V(\Si)$\\
$2,3$ & $3$-transitive on $V(\Si)$\\
$4$ & $4$-transitive on $V(\Si)$, or $n=9$ and $G=\PGaL(2,8)$\\ \hline
\end{tabular}
\end{center}
\end{table}
\begin{proof}
By Theorem~\ref{main3} and Corollary~\ref{basic}, for $s=1,2$ respectively, $\ssig$ is locally $(G,s)$-distance transitive if and only if $\Si$ is $(G,s)$-arc transitive, or equivalently, $G$ is $(s+1)$-transitive on $V(\Si)$. Also $\ssig$ is locally $(G,3)$-distance transitive if and only if $\Si$ is $(G,2)$-arc transitive, that is, $G$ is 3-transitive.
Finally if $\Ga=\ssig$ is locally $(G,4)$-distance transitive, then for $e=\{1,2\}$, $G_e$ is transitive on $\Ga_4(e)=\{\{i,j\}\,|\, i>j>2\}$, and it follows that $G$ is transitive on $4$-subsets of $V(\Si)$. By \cite[Theorem 9.4B]{Dixon96}, either $G$ is 4-transitive (and all such groups act locally 4-distance transitively on $\Ga$),
or $n=9$, $G=\PGL(2,8)$ or $\PGaL(2,8)$, or $n=33$, $G=\PGaL(2,32)$.
The groups $\PGL(2,8)$ and $\PGaL(2,32)$ do not arise since in these cases $|\Ga_4(e)|$ does not divide $|G_e|$. On the other hand if $n=9$ and $G=\PGaL(2,8)$, then $\Ga$ is locally $(G,3)$-distance transitive by Corollary~\ref{basic}, $\Ga_4(v)=\emptyset$ for $v\in V(\Si)$ and   $G_e$ is transitive on both $\Ga_2(e)$ and $\Ga_4(e)$ by \cite[p.6]{Atlas}; it follows that $\Ga$ is locally $(G,4)$-distance transitive. 
\end{proof}

\subsection{$\Sigma=K_{n,n}$ with $n\geqslant 2$}

We have $\diam(\Sigma)=2$, $\diam(\ssig)=4$ and $\Aut(\Sigma)=S_{n}\wr S_{2}$ acting imprimitively on the vertices with the two biparts forming an invariant vertex partition. If a subgroup $G$ is vertex transitive then, by the Embedding Theorem~\cite[Theorem 8.5]{BMMN} for imprimitive groups, we may assume that $G\leqslant H\wr S_2$ with $G\cap(H\times H)$ projecting onto $H$ in each component. In this case the group $H\leqslant S_n$ is called the component of $G$.

\begin{prop}\label{ex-Knn}
Let $\Delta_1$ and $\Delta_2$ be biparts of $\Sigma=K_{n,n}$ with $n\geqslant 2$. Then the graph $\ssig$ is locally $(G,4)$-distance transitive if and only if (i) $G$ is transitive on $V(\Si)$ with $G\leqslant H\wr S_2$ having component $H$, (ii) $H$ is 2-transitive on each $\Delta_i$ and, for $u_1\in\Delta_1$, $G_{u_1}$ is transitive on $(\Delta_1\setminus\{u_1\})\times \Delta_2$, and (iii) for $u_1\in\Delta_1$ and $u_2\in\Delta_2$, the stabiliser $G_{\{u_1,u_2\}}$ interchanges $u_1$ and $u_2$, and is transitive  on $\{\{v_1,v_2\} \mid v_i\in \Delta_i\setminus\{u_i\}\,\}$.
 In particular, $\ssig$ is locally $G$-distance transitive if and only if $G$ satisfies these three conditions.
\end{prop}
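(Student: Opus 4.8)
The plan is to put $\Ga:=\ssig$ and, using Remark~\ref{prop-main1} together with $\diam(\Si)=2$ and $\diam(\Ga)=4$, to determine the distance partition of $\Ga$ about each of its three kinds of vertex. A short computation gives, for $u_1\in\Delta_1$, that $\Ga_1(u_1)$ is the set of edges on $u_1$, $\Ga_2(u_1)=\Delta_2$, $\Ga_3(u_1)$ is the set of edges off $u_1$, and $\Ga_4(u_1)=\Delta_1\setminus\{u_1\}$ (with the symmetric statement at a vertex of $\Delta_2$); and, for $e=\{u_1,u_2\}$ with $u_i\in\Delta_i$, that $\Ga_1(e)=\{u_1,u_2\}$, $\Ga_2(e)$ is the set of edges meeting $e$ in exactly one vertex, $\Ga_3(e)=(\Delta_1\setminus\{u_1\})\cup(\Delta_2\setminus\{u_2\})$, and $\Ga_4(e)$ is the set of edges disjoint from $e$. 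Since $\diam(\Ga)=4$ and these spheres are non-empty (as $n\geqslant2$), local $(G,4)$-distance transitivity coincides with local $G$-distance transitivity, and both amount to asking that $G_\alpha$ be transitive on $\Ga_i(\alpha)$ for every vertex $\alpha$ and every $1\leqslant i\leqslant 4$; this settles the final assertion of the proposition at once.

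I would then read off what these transitivity requirements mean. Because $G_{u_1}$ fixes $u_1$ it preserves each $\Delta_i$, so transitivity of $G_{u_1}$ on $\Ga_3(u_1)$ is exactly transitivity on $(\Delta_1\setminus\{u_1\})\times\Delta_2$; this single condition subsumes transitivity on $\Ga_1(u_1)$, $\Ga_2(u_1)$ and $\Ga_4(u_1)$, and it is precisely the product condition in (ii). Similarly, transitivity of $G_e$ on $\Ga_1(e)$ says that $G_{\{u_1,u_2\}}$ interchanges $u_1$ and $u_2$, while transitivity on $\Ga_4(e)$ is the second clause of (iii).

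For the forward implication I would first feed the transitivity of each $G_e$ on $\Ga_1(e)$ into Lemma~\ref{lem3}(a) to deduce that $G$ is vertex-transitive, so that the Embedding Theorem yields the wreath-product description of (i) with a component $H$. I would then use that $G_{u_1}$ lies in the bipart-preserving part of $G$ and induces exactly $H_{u_1}$ on $\Delta_1$, whence transitivity of $G_{u_1}$ on $\Ga_4(u_1)=\Delta_1\setminus\{u_1\}$ forces $H$ to be $2$-transitive on $\Delta_1$ (and, by the symmetric argument, on $\Delta_2$); combined with the product condition this gives (ii), and (iii) is just the two edge conditions already identified.

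For the converse I would assume (i)--(iii) and verify every sphere-transitivity. The vertex spheres are immediate from the product condition in (ii) and its counterpart at $\Delta_2$, the latter obtained by conjugating (ii) with a bipart-swapping element (supplied by (iii)) and invoking the vertex-transitivity of (i). The edge spheres $\Ga_1(e)$ and $\Ga_4(e)$ are conditions in (iii). The only points left, and the fiddly heart of the argument, are $\Ga_2(e)$ and $\Ga_3(e)$: here I would use the bipart-swapping $\sigma\in G_e$ from (iii) to interchange the two halves of each sphere (the two families of edges meeting $e$, respectively $\Delta_1\setminus\{u_1\}$ and $\Delta_2\setminus\{u_2\}$), and deduce transitivity within a single half from the fact, extracted from the product condition in (ii), that $G_{u_1,u_2}$ is already transitive on $\Delta_1\setminus\{u_1\}$ and on $\Delta_2\setminus\{u_2\}$. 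I expect this last step---showing that $\Ga_2(e)$- and $\Ga_3(e)$-transitivity are consequences of (i)--(iii) rather than additional hypotheses, via the interaction of $\sigma$ with the product transitivity---to be the main obstacle and to demand the most careful bookkeeping.
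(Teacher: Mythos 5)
Your proposal is correct, and its skeleton matches the paper's: Lemma~\ref{lem3}(a) plus the Embedding Theorem give (i), conditions (ii) and (iii) are read off from sphere-transitivity requirements, and $\diam(\Ga)=4$ makes local $(G,4)$-distance transitivity coincide with local $G$-distance transitivity, settling the final assertion. The one genuine difference is how the product condition in (ii) is obtained in the necessity direction. The paper never looks at $\Ga_3(u_1)$: it observes that $G_{u_1}$ is transitive on $\Ga_2(u_1)=\Delta_2$ and on $\Ga_4(u_1)=\Delta_1\setminus\{u_1\}$, and since these sets have coprime sizes $n$ and $n-1$, transitivity on $(\Delta_1\setminus\{u_1\})\times\Delta_2$ follows (any orbit on the product has size divisible by both $n$ and $n-1$, hence equal to $n(n-1)$). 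You instead identify $\Ga_3(u_1)$, the set of edges off $u_1$, with $(\Delta_1\setminus\{u_1\})\times\Delta_2$ as a $G_{u_1}$-set, so that the product condition \emph{is} transitivity on the distance-$3$ sphere. Your route is arguably cleaner: it makes the necessity of (ii) immediate rather than a consequence of a coprimality trick, and the same identification does the real work in the sufficiency direction, which the paper dismisses as ``not difficult to show.'' Your treatment of that converse is also sound: from (ii), $G_{u_1,u_2}$ is transitive on $\Delta_1\setminus\{u_1\}$, the swapping element $\sigma\in G_e$ supplied by (iii) conjugates this to transitivity on $\Delta_2\setminus\{u_2\}$, and $\sigma$ interchanges the two $G_{u_1,u_2}$-invariant halves of $\Ga_2(e)$ and of $\Ga_3(e)$, giving $G_e$-transitivity on both. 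In short, the paper's argument buys brevity; yours buys transparency and a fully written-out converse.
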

\begin{proof}
Let $\Ga=\ssig$. It is not difficult to show that conditions (i)--(iii) together ensure that $\Ga$ is locally $(G,4)$-distance transitive. So suppose conversely that $G$ is such that $\Ga$ is locally $(G,4)$-distance transitive. By Lemma~\ref{lem3}(a), $G$ is transitive on $V(\Si)$, so we may assume that $G\leqslant H\wr S_2$ with component $H$ and $H$ acts transitively on each $\Delta_i$. Let   $u_1\in\Delta_1$ and $u_2\in\Delta_2$, and $e=\{u_1,u_2\}$. Since, $G_{u_1}$ is transitive on $\Ga_2(u_1)=\Delta_2$ and $\Ga_4(u_1)=\Delta_1\setminus\{u_1\}$, and these sets have coprime sizes, it follows that $G_{u_1}$ is transitive on $(\Delta_1\setminus\{u_1\})\times \Delta_2$. In particular $H$ is 2-transitive on each $\Delta_i$. Also, since $G_e$ is transitive on $\Ga_1(e)$ and $\Ga_4(e)$, it follows that $G_{\{u_1,u_2\}}$ interchanges $u_1$ and $u_2$, and is transitive on  $\{\{v_1,v_2\}\,|\, v_i\in \Delta_i\setminus\{u_i\}\,\}$.  The final assertion holds since $\diam(\Ga)=4$.
\end{proof}



\subsection{$\Sigma=C_{n}$ for $n\geqslant 3$}
Here we consider  $C_n$ in general instead of $C_5$ (which is in Table~\ref{table}) because we will need it in the proof of Theorem \ref{thm-char}(a).
We have $\diam(\Si)=\lfloor\frac{n}{2}\rfloor$, $\diam(\ssig)=n=\girth(\Si)$ (so $\delta=0$ if $n$ is even and $\delta=1$ if $n$ is odd), and   $\Aut \Si=D_{2n}$.

 Also $\diam(\Si)=2$, $\diam(\ssig)=5$ and   $\Aut(\Sigma)=D_{10}$, and:

\begin{prop}\label{ex-Cn}
The graph $\ssig$ is locally $(G,s)$-distance transitive for some $s\leqslant n$ if and only if $G=D_{2n}$  and  if and only if  $\ssig$ is locally $G$-distance transitive.
\end{prop}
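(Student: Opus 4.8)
Write $\Ga := \ssig$, which equals $C_{2n}$; by Remark~\ref{prop-main1} its diameter is $n$, so the three conditions in the statement all concern values $s$ with $1\leqslant s\leqslant n$. I would prove the result as a cycle of implications, the heart of which is an analysis of the induced action of $G$ on $C_{2n}$. Label the vertices of $\Ga$ cyclically by the integers modulo $2n$, so that the two biparts $V(\Si)$ and $E(\Si)$ are the even and odd residues. As recalled in the introduction, $\Aut(\Ga)=D_{4n}$ and $\Aut(\Si)=D_{2n}$ is its index-$2$ subgroup preserving this bipartition; concretely $G\leqslant D_{2n}$ consists of the even rotations $i\mapsto i+2k$ together with the reflections $i\mapsto c-i$ with $c$ even. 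For each vertex $v$, the unique non-identity element of $D_{2n}$ fixing $v$ is the reflection $r_v\colon i\mapsto 2v-i$, so the full stabiliser $\Aut(\Si)_v=\{1,r_v\}$ and hence $G_v\leqslant\{1,r_v\}$.

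The key step is the implication that any of the transitivity hypotheses forces $G=D_{2n}$. By the monotonicity of local $(G,s)$-distance transitivity and Remark~\ref{s=1}, if $\Ga$ is locally $(G,s)$-distance transitive for some $s$ with $1\leqslant s\leqslant n$ then it is locally $(G,1)$-arc transitive; equivalently, for every vertex $v$ the stabiliser $G_v$ is transitive on the two neighbours $v\pm1$. Since the only element of $G_v\leqslant\{1,r_v\}$ that can swap these neighbours is $r_v$, this forces $r_v\in G$ for every $v$. As $v$ runs over the residues modulo $2n$, the reflections $r_v$ exhaust all $n$ bipartition-preserving reflections (those with $c$ even), and any two of them compose to an even rotation; hence $\langle r_v \mid v\in\mathbb{Z}_{2n}\rangle=D_{2n}$, giving $G=D_{2n}$.

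For the reverse direction I would verify directly that $G=D_{2n}$ yields local $G$-distance transitivity, i.e.\ local $(G,n)$-distance transitivity since $\diam(\Ga)=n$. For $1\leqslant i\leqslant n-1$ one has $\Ga_i(v)=\{v-i,v+i\}$, while $\Ga_n(v)=\{v+n\}$ is a single (antipodal) vertex; the reflection $r_v\in G_v$ sends $v+i$ to $v-i$, so $G_v$ is transitive on each $\Ga_i(v)$, as required. Finally, local $G$-distance transitivity trivially implies local $(G,s)$-distance transitivity for every $s\leqslant n$, and in particular for some such $s$, which closes the cycle of equivalences.

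The computation is routine once the right viewpoint is fixed, so I do not expect a serious obstacle; the only point needing care is the bookkeeping forced by the bipartition constraint, namely keeping track that $G$ lies in the index-$2$ subgroup $D_{2n}$ of $\Aut(C_{2n})=D_{4n}$ and that consequently each vertex stabiliser is generated by a single reflection. Getting this embedding right is what makes the stabiliser analysis, and hence the whole argument, go through cleanly.
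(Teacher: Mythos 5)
Your proposal is correct. Note, however, that the paper itself offers \emph{no} proof of Proposition~\ref{ex-Cn}: it is stated as a routine fact in Section~\ref{sec-examples} (only the identifications $\ssig=C_{2n}$, $\diam(\ssig)=n$ and $\Aut(\Sigma)=D_{2n}$, with $\Aut(\ssig)=D_{4n}$, are recorded), so there is no written argument to compare yours against. Your write-up supplies exactly the verification the authors leave implicit, and it is sound: the identification of $\Aut(\Sigma)$ with the index-$2$ bipartition-preserving subgroup of $D_{4n}$ is what the paper asserts in the introduction; the observation that each vertex stabiliser in that subgroup is $\{1,r_v\}$ makes the forward implication immediate (local $(G,s)$-distance transitivity for any $s\geqslant 1$ already demands, by definition, that $G_v$ be transitive on $\Gamma_1(v)=\{v-1,v+1\}$ for \emph{every} $v$, so you did not even need to invoke monotonicity and Remark~\ref{s=1}, though doing so is harmless); and the generation claim $\langle r_v\mid v\rangle = D_{2n}$ and the converse computation $\Gamma_i(v)=\{v\pm i\}$, $\Gamma_n(v)=\{v+n\}$ are both correct. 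The cycle of implications (some $s$) $\Rightarrow$ ($G=D_{2n}$) $\Rightarrow$ (locally $G$-distance transitive) $\Rightarrow$ (some $s$) cleanly establishes all the stated equivalences.
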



\subsection{$\Sigma=\rmP$, the Petersen graph}

Here $\diam(\Si)=2$, $\diam(\ssig)=6$  and   $\Aut(\Sigma)=S_5$.
\begin{prop}\label{ex-O3}
The graph $\ssig$ is locally $(G,s)$-distance transitive, for $s=4$ or for $s=5$, if and only  $G=S_5$. Moreover, $\ssig$ is locally $S_5$-distance transitive.
\end{prop}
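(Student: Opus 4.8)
The statement to prove is Proposition~\ref{ex-O3}: for $\Si=\rmP$ the Petersen graph, the subdivision graph $\ssig$ is locally $(G,s)$-distance transitive for $s=4$ or $s=5$ if and only if $G=S_5$, and moreover $\ssig$ is locally $S_5$-distance transitive. Since $\diam(\ssig)=6$ here (so $d=2$, $\delta=2$), the values $s=4,5$ fall into the range $2d\leqslant s\leqslant 2d+\delta$ where Theorem~\ref{main3} gives no information, so I cannot simply quote the earlier reduction to arc-transitivity of $\Si$. The plan is therefore to argue directly, exploiting the known structure and automorphism group of the Petersen graph.

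The plan is to split the biconditional into its two directions. For the necessity direction (local $(G,s)$-distance transitivity $\Rightarrow G=S_5$), I would first note that $\Aut(\rmP)=S_5$, so $G\leqslant S_5$, and then show that the required transitivity forces $G$ to be all of $S_5$. The Petersen graph has girth $5$, so Proposition~\ref{3-arc} applies: local $(G,4)$-distance transitivity of $\ssig$ forces $\Si$ to be $(G,3)$-arc transitive. Since $s=5$ implies $s=4$ by the chain of implications noted after Table~\ref{def:localtrans}, both values of $s$ yield $(G,3)$-arc transitivity of $\rmP$. The Petersen graph is well known to be $(S_5,3)$-arc transitive but not $3$-arc transitive under any proper subgroup (indeed $\Aut(\rmP)=S_5$ acts on the $3$-arcs with a single orbit whose stabiliser is trivial, or one checks the order count $|S_5|=120$ equals the number of $3$-arcs), so $(G,3)$-arc transitivity forces $G=S_5$. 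This disposes of necessity for both $s=4$ and $s=5$ simultaneously.

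For the sufficiency and the final assertion, I would prove the strongest statement, namely that $\ssig$ is locally $S_5$-distance transitive, i.e.\ locally $(S_5,6)$-distance transitive since $\diam(\ssig)=6$; this immediately gives local $(S_5,s)$-distance transitivity for all $s\leqslant 6$, covering $s=4,5$. The task reduces to checking, for each vertex $\alpha\in V(\ssig)$ and each $i$ with $1\leqslant i\leqslant 6$, that the stabiliser $(S_5)_\alpha$ is transitive on the distance-$i$ sphere $\Ga_i(\alpha)$. Because $V(\ssig)=V(\rmP)\cup E(\rmP)$ has two $S_5$-orbits (the $10$ vertices and the $15$ edges), it suffices to verify this for one representative vertex $\alpha\in V(\rmP)$ and one representative $e\in E(\rmP)$. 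Using the distance formulas in Remark~\ref{prop-main1} to translate $\Ga$-distances into $\Si$-distances, I would compute each sphere $\Ga_i(\alpha)$ and $\Ga_i(e)$ explicitly in terms of $\rmP$-data (neighbours, vertices at distance $2$, edges, etc.) and then verify the relevant stabiliser acts transitively; this rests on the strong $3$-arc-transitivity of $\rmP$ and the known orbit structure of vertex- and edge-stabilisers in $S_5$.

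The main obstacle is the explicit sphere computation in the top range $i=4,5,6$ for both base points. Since $\delta=2$ and $\diam(\ssig)=6$, the outermost spheres $\Ga_5(\alpha),\Ga_6(\alpha)$ and $\Ga_4(e),\Ga_5(e),\Ga_6(e)$ are determined by the case-analysis in Remark~\ref{prop-main1}(a), and one must keep careful track of which edge-pairs realise distance $\diam(\rmP)=2$ in every coordinate (the condition in Remark~\ref{prop-main1}(c)) to identify $\Ga_6$ correctly. Verifying transitivity on these outer spheres is where the concrete combinatorics of the Petersen graph does the real work; the inner spheres $i\leqslant 3$ follow more cheaply from Corollary~\ref{basic} and Proposition~\ref{3-arc} together with the $(S_5,3)$-arc-transitivity established in the necessity direction. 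I expect the verification to be routine but bookkeeping-intensive, and the cleanest route is to record the sizes $|\Ga_i(\alpha)|$ and $|\Ga_i(e)|$ and match them against the orbit lengths of the respective stabilisers.
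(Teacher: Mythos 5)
Your proposal is correct, and its necessity argument takes a genuinely different route from the paper's. The paper argues arithmetically inside $\ssig$: Lemma~\ref{lem3}(a) gives transitivity of $G$ on $V(\rmP)$, and then transitivity of the stabilisers on the spheres $\Gamma_4(v)$ and $\Gamma_4(e)$ (together with transitivity on both biparts) forces $120\mid |G|$, whence $G=S_5$. You instead apply Proposition~\ref{3-arc} (legitimately, since $\girth(\rmP)=5$) to pull the hypothesis back to $\rmP$ itself, obtaining $(G,3)$-arc transitivity, and then count: $\rmP$ has exactly $10\cdot 3\cdot 2\cdot 2=120$ three-arcs, so $120\mid |G|$ while $|G|\leqslant |S_5|=120$, giving $G=S_5$. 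Both are divisibility arguments, but yours replaces the computation of distance-$4$ spheres in the subdivision graph by a $3$-arc count in the Petersen graph; a small bonus of your route is that it sidesteps the paper's slip in stating $|\Gamma_4(v)|=4$ (the correct value is $6$, since $\Gamma_4(v)$ consists of the vertices at $\rmP$-distance $2$ from $v$; the paper's conclusion is unaffected because the edge sphere $|\Gamma_4(e)|=8$ and edge-transitivity already yield $120\mid |G|$). On the sufficiency side and for the final assertion, the two proofs coincide in substance: the paper exhibits $G_v\cong S_3\times S_2$ and $G_e\cong D_8$ and states that local $S_5$-distance transitivity ``can easily be checked'', while you outline the same verification somewhat more explicitly (one representative per bipart, spheres computed via Remark~\ref{prop-main1}, stabiliser orbit lengths matched against sphere sizes). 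Neither carries out the bookkeeping in print, so your proposal is no less complete than the published proof.
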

\begin{proof}
Let $\Ga=\ssig$ and suppose that $\Ga$ is locally $(G,4)$-distance transitive. Then
by Lemma~\ref{lem3}(a), $G$ is transitive on $V(\Si)$. Since $|V(\Sigma)|=10$, $|\Gamma_{4}(v)|=4$ and $|\Gamma_{4}(e)|=8$, for $v\in V(\Sigma)$ and $e\in E(\Sigma)$, we have that $120\mid |G|$, and so $G=S_{5}$. Conversely, let $G=S_{5}$. Then we have $G_{v}=S_{3}\times S_{2}$, $G_{e}=D_{8}$ and we can easily check that $\ssig$ is locally $G$-distance transitive.
\end{proof}

\subsection{$\Sigma=\HoSi$, the Hoffman-Singleton graph}
The Hoffman-Singleton graph is a regular graph  of valency $k=7$ and diameter $2$ with the largest possible number $k^2+1$ of vertices, see \cite{HoSi}.
Here, $\diam(\Sigma)=2$, $\diam(\ssig)=6$  and   $\Aut(\Sigma)=H.2$, where $H=\PSU(3, 5)$.
\begin{prop}\label{ex-HoSi}
The graph $\ssig$ is locally $(G,s)$-distance transitive, for $s=4$ or for $s=5$, if and only  $G=H$ or $H.2$. Moreover, $\ssig$ is locally $H$-distance transitive and locally $H.2$-distance transitive.
\end{prop}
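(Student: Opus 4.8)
The plan is to mirror Propositions~\ref{ex-Kn} and \ref{ex-O3}: a counting/divisibility argument forces $G\in\{H,H.2\}$, after which the required transitivity is exhibited. Write $\Ga=\ssig$, and recall that $|V(\Si)|=50$, $|E(\Si)|=175$, $\diam(\Si)=2$, $\girth(\Si)=5$, and $\Aut(\Si)=H.2$ with $H=\PSU(3,5)$ of order $126000$; the vertex stabiliser in $H$ is $A_7$ (order $2520$, index $50$) and the edge stabiliser $H_e$ has order $720$ (it is $M_{10}\cong A_6.2$). First I would record the distance distribution of $\Ga$ from Remark~\ref{prop-main1} together with the Moore-graph property of $\Si$ (no $3$- or $4$-cycles): for $v\in V(\Si)$ this yields $|\Ga_1(v)|=|\Ga_2(v)|=7$, $|\Ga_3(v)|=|\Ga_4(v)|=42$, $|\Ga_5(v)|=126$ and $\Ga_6(v)=\emptyset$, while for $e\in E(\Si)$ it yields $|\Ga_1(e)|=2$, $|\Ga_2(e)|=|\Ga_3(e)|=12$, $|\Ga_4(e)|=72$, $|\Ga_5(e)|=36$ and $|\Ga_6(e)|=90$, the last counting the edges both of whose ends lie at distance $2$ from both ends of $e$.

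For the forward implication, suppose $\Ga$ is locally $(G,s)$-distance transitive with $s\in\{4,5\}$; in either case it is locally $(G,4)$-distance transitive, so by Proposition~\ref{3-arc} (valid since $\girth(\Si)=5$) the graph $\Si$ is $(G,3)$-arc transitive. As $\Si$ has exactly $50\cdot7\cdot6\cdot6=12600$ three-arcs, $12600$ divides $|G|$. Now $G\leqslant H.2$, so either $G\leqslant H$, or $[G:G\cap H]=2$ and $|G\cap H|=|G|/2\geqslant6300$. Since the minimal index of a proper subgroup of the simple group $\PSU(3,5)$ is $50$, every proper subgroup of $H$ has order at most $2520<6300$; hence $G\cap H=H$, that is $H\leqslant G$, and therefore $G=H$ or $G=H.2$.

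For the converse, and for the final assertion, it suffices to treat $G=H$, since local distance transitivity passes to overgroups (as $(H.2)_\alpha\geqslant H_\alpha$ for every $\alpha\in V(\Ga)$) and so the case $G=H.2$ follows. The graph $\Si$ is $(H,3)$-arc transitive, which I would either quote from Ivanov's classification~\cite{Ivanov87} or verify directly by checking that the point stabiliser $A_7$ and the successive arc stabilisers act transitively on arc-extensions. By Theorem~\ref{thm:bary} with $t=3$ this makes $\Ga$ locally $(H,5)$-arc transitive, and since geodesics are arcs, $\Ga$ is then locally $(H,5)$-distance transitive; in particular the stated equivalence holds for $s=4$ and $s=5$. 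Finally, $\diam(\Ga)=6$ and $\Ga_6(v)=\emptyset$ for every $v\in V(\Si)$, so the only condition still needed for full local $H$-distance transitivity is that $H_e$ be transitive on $\Ga_6(e)$; here $|H_e|=720$ and $|\Ga_6(e)|=90$, and I would check this single transitivity directly from the action of $H_e\cong M_{10}$ on the $36$ vertices at distance $2$ from both ends of $e$ (equivalently from \cite{Atlas}). The same verification for $(H.2)_e$ then gives local $H.2$-distance transitivity.

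The main obstacle is precisely this transitivity on $\Ga_6(e)$. The automatic tools (Theorem~\ref{thm:bary} and Proposition~\ref{3-arc}) reach only distance $5$, because $\Si$ is not $4$-arc transitive: indeed the number $50\cdot7\cdot6^3=75600$ of $4$-arcs does not divide $|H.2|=252000$. Hence distance $6$ must be settled by an explicit analysis of the edge stabiliser rather than by the general theory. Supplying a self-contained proof of the $(H,3)$-arc transitivity of $\Si$, instead of merely invoking \cite{Ivanov87}, is the other point that requires care.
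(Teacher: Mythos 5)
Your proposal is correct and follows the same overall strategy as the paper's proof: a counting/divisibility argument pins down $G$, and Atlas-level facts about $H=\PSU(3,5)$ supply the converse. The details differ in both directions, in ways worth recording. For the forward implication the paper does not invoke Proposition~\ref{3-arc} at all: it applies Lemma~\ref{lem3}(a) to get vertex-transitivity and then multiplies the orbit sizes $|V(\Si)|=50$, $|\Ga_1(v)|=7$ and $|\Ga_4(e)|=72$ to conclude that $2^3\cdot 3^2\cdot 5^2\cdot 7=|H|/10$ divides $|G|$, whence $G=H$ or $H.2$ by \cite[p.~34]{Atlas}; your route through $(G,3)$-arc transitivity and the count of $50\cdot7\cdot6\cdot6=12600$ three-arcs reaches exactly the same divisibility bound and rests on the same Atlas fact (no proper subgroup of $H$ has order exceeding $|A_7|=2520$), so this difference is cosmetic. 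For the converse the paper is terse: it quotes $G_v=A_7$ and $G_e=M_{10}$ from \cite{Atlas} and asserts that local $X$-distance transitivity for $X=H$ or $H.2$ can then ``easily'' be checked. Your treatment is sharper here: combining the $(H,3)$-arc transitivity of $\Si$ with Theorem~\ref{thm:bary} and the observation that geodesics are arcs gives local $(H,5)$-distance transitivity automatically, and you correctly isolate the single remaining verification --- transitivity of $H_e\cong M_{10}$ (order $720$) on the $90$ edges in $\Ga_6(e)$ --- together with the reason the general machinery cannot supply it ($\Si$ is not $4$-arc transitive, since $75600\nmid 252000$). Note, however, that this final check is deferred in both arguments (yours ``directly or from \cite{Atlas}'', the paper's ``easily''), so neither is more self-contained at that one point; your version simply makes explicit what must be computed, and your distance counts ($|\Ga_i(v)|$ and $|\Ga_i(e)|$ for $i\leqslant 6$) are all correct.
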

\begin{proof}
Let $\Ga=\ssig$ and suppose that $\Ga$ is $(G,4)$-distance transitive. Then
by Lemma~\ref{lem3}(a), $G$ is transitive on $V(\Si)$. Since
$|V(\Sigma)|=50$, $|\Gamma_{1}(v)|=7$ and $|\Gamma_{4}(e)|=72$, for $v\in V(\Sigma)$ and $e\in E(\Sigma)$, we have that $2^3.3^2.5^2.7=|H|/10$ divides $|G|$, and hence $G=H$ or $H.2$ by \cite[p. 34]{Atlas}. On the other hand, for $G=H$, it follows from \cite[p. 34]{Atlas} that $G_v=A_7$ and $G_e=M_{10}$, and so we can easily prove that $\Gamma$ is locally $X$-distance transitive for $X=H$ or $H.2$.
\end{proof}

\section{Proofs of Theorem~\ref{thm-char} and Corollary~\ref{ldt}.}\label{sec-proof}

\noindent \emph{ Proof of Theorem \ref{thm-char}(a). }\quad
Suppose that $2d\leqslant s\leqslant 2d+\delta$ and that $s\geqslant 15+\delta$. Then $d\geqslant 8$ and so $s\geqslant \max\{16, 15+\delta\}$.  Let $\Ga=\ssig$. Suppose first that $\Ga=\S(\Si)$ is locally $(G,s)$-distance transitive.
Set $s':=s-\delta-1$. Then $s'\leqslant 2d-1$ and $\Ga=\S(\Si)$ is locally $(G,s')$-distance transitive. By Theorem~\ref{main3}, $\Si$ is $(G,t)$-arc transitive, where $t=\lceil\frac{s'+1}{2}\rceil$. If $s=16$ then $16\geqslant 15+\delta$ so $\delta\leqslant 1$ and $s'=s-\delta-1\geqslant 14$. On the other hand if $s\geqslant17$, then $s'=s-\delta-1\geqslant 14$. Thus in both cases $s'\geqslant14$ and hence $t\geqslant 8$. By Weiss' Theorem  \cite{Weiss1981},
it follows that $\Si=C_n$ for some $n$. Since $\ssig$ is locally $(G,s)$-distance transitive, $G=D_{2n}$ by Proposition \ref{ex-Cn}. If $n$ is even, then $\delta=0$, and so $s=2d=n$, whence $\Si=C_s$. If $n$ is odd, then $\delta=1$, and so $s=2d=n-1$ or $s=2d+1=n$, giving $\Si=C_{s+1}$ or $C_s$, respectively. Notice that in the former case, $s$ is even.

Conversely suppose that $\Si=C_s$ and $G=D_{2s}$  or $s$ is even, $\Si=C_{s+1}$ and $G=D_{2s+2}$.  Then $\ssig$ is locally $G$-distance transitive by  Proposition \ref{ex-Cn}, and since $\diam(\ssig)\geqslant s$, we have that $\ssig$ is locally $(G,s)$-distance transitive. Notice that $s\geqslant 16$, as $\delta=1$ for $\Si=C_s$ with $s$ odd.
\hfill $\square$

\bigskip
\noindent\emph{Proof of Theorem~\ref{thm-char}(b).}\quad
Suppose that Hypothesis~\ref{hyp} holds and that $2d\leqslant s\leqslant 2d+\delta$ and $s\leqslant5$.
In particular $d\leqslant2$.  Suppose first that $d=1$. Then $\Si=K_n$ for some $n\geqslant2$. If $n=2$ or $n=3$ then the assertions of Theorem~\ref{thm-char} and the entries in Table~\ref{table} hold, see Section \ref{ex-K23}. If $n\geqslant4$, then these hold by Proposition~\ref{ex-Kn}.

Suppose now that $d=2$, so $4\leqslant s\leqslant 4+\delta$ and by assumption $s\leqslant 5$.
Let $\Ga=\ssig$. Suppose first that $\Ga$ is locally $(G,s)$-distance transitive. Then in particular $\Ga$ is   locally $(G,3)$-distance transitive, and so by Theorem~\ref{main3}, $\Si$ is $(G,2)$-arc transitive. Note that the girth of $\Si$ is at most 5 (since $d=2$)
and at least 4 (since $\Si$ is $(G,2)$-arc transitive and $d\ne1$, see Remark \ref{lem10}).

We now prove that if $\girth(\Si)=4$ then $\Si=K_{n,n}$, for some $n\geqslant2$.
Let $(u_0. u_1, u_2, u_3)$ be a $4$-cycle in $\Si$ and set $e_i=\{u_i, u_{i+1}\}$ for $0\leqslant i\leqslant 2$. Then $\alpha :=(e_0, u_1, e_1, u_2, e_2)$ is a $4$-arc in $\Ga$, and $d_{\Ga}(e_0, e_2)=4$ since $e_0\cap e_2=\emptyset$. Thus $\Ga_4(e_0)\neq \emptyset$. For every edge $e'_1=\{u_1, u'_2\}$ containing $u_1$, $d_{\Si}(u_0, u'_2)=2$ since $\girth(\Si)>3$. For the same reason, for every edge $e'_2=\{u'_2, u'_3\}$ containing $u'_2$ with $e'_2\neq e'_1$, we have $d_\Si(u_3',u_1)=2$ and $u_3'\ne u_0$, so $e_0\cap e_2'=\emptyset$ and $d_{\Ga}(e_0, e'_2)=4$. Thus $e_2, e'_2\in \Ga_4(e_0)$ and so $e^a_2=e'_2$ for some $a\in G_{e_0}$. This implies that $u_3^a\in\Si_1(u_0^a)\cap \Si_2(u_1^a)\cap e_2'$. If $a$ fixes $u_0$ and $u_1$, then we conclude that $u_3^a=u_3'\in\Si_2(u_1)$ and hence $u_3'\in\Si_1(u_0)$. On the other hand, if $a$ does not fix $u_0$ then it must interchange $u_0$ and $u_1$, and we must have $u_3^a=u'_2\in\Si_1(u_1)\cap e_2'$, which implies that $u_3'=u_2^a\in\Si_1(u_1^a)=\Si_1(u_0)$. In either case $u_3'$ is adjacent to $u_0$ in $\Si$.
Thus $\Si_1(u_2')\subseteq\Si_1(u_0)$ for each $u_2'\in\Si_2(u_0)$. It follows that $\Si_1(u_2')=\Si_1(u_0)$ since $u_0$ and $u'_2$ have the same valency, and hence that $\Si$ is a complete bipartite graph $K_{n,m}$ for some $n,m\geqslant2$. Since $G$ is transitive on $V(\Si)$, $n=m$.

It follows from Proposition~\ref{ex-Knn} that  the assertions of Theorem~\ref{thm-char} and the entries in Table~\ref{table} hold if $\Si=K_{n,n}$ with $n\geqslant2$, and hence if $\girth(\Si)=4$. Thus we may assume that $\girth(\Si)=5$. In this case, it follows from Proposition \ref{3-arc}, that $\Si$ is $(G,3)$-arc transitive. Therefore by \cite[Lemma 3.4]{Ivanov87}, $\Si$ is one of the graphs $C_5$, $\rmP$ or  $\HoSi$. It now follows, from Propositions \ref{ex-Cn}, \ref{ex-O3}, and \ref{ex-HoSi}, that
the assertions of Theorem~\ref{thm-char} and the entries in Table~\ref{table} hold in the case where $\girth(\Si)=5$. This completes the proof. \hfill$\square$

\bigskip
\noindent\emph{Proof of Corollary~\ref{ldt}.}\quad
Suppose that $\Si$ is a connected graph of diameter $d\leqslant 2$ such that $|V(\Si)|\geqslant2$ and  let $G\leqslant \Aut(\Sigma)$. Thus $\diam(\ssig)=2d+\delta\leqslant 6$.  Suppose that $\ssig$ is locally $G$-distance transitive.
If $2d+\delta\leqslant 5$ then, applying Theorem~\ref{thm-char}(b) with $s=2d+\delta$, we find that $\Si, G$ are as in Table~\ref{table}. Thus we may assume that $\diam(\ssig)=2d+\delta= 6$, so $(d,\delta)=(2,2)$.
Then Theorem~\ref{thm-char}(b) applies with $s=5$, again yielding the graphs in Table~\ref{table}.  The local $G$-distance transitivity of the graphs in Table~\ref{table} follows from the properties proved in Section~\ref{sec-examples} about these graphs.
\hfill $\square$

\section*{Acknowledgements}

The first author thanks Bu-Ali Sina University and the University of Western Australia for support during her sabbatical leave.  The authors thank anonymous referees for helpful suggestions with the exposition.  The third author is supported by Australian Research Council Federation Fellowship FF0776186. This paper forms part of the Federation Fellowship project.



\section*{References}




\begin{thebibliography}{00}

\bibitem{BMMN}
M. Bhattacharjee, D. Macpherson, R.G. M{\"o}ller, and
  P.M. Neumann, \emph{Notes on infinite permutation groups}, Texts and
  Readings in Mathematics, vol.~12, Hindustan Book Agency, New Delhi, 1997,
  Lecture Notes in Mathematics, 1698. 

\bibitem{Biggs74}
Norman Biggs, \emph{Algebraic graph theory}, Cambridge University Press,
  London, 1974, Cambridge Tracts in Mathematics, No. 67. 

\bibitem{Cameron83}
P.J. Cameron, C.E. Praeger, J. Saxl, and G.M. Seitz, \emph{On the {S}ims
  conjecture and distance transitive graphs}, Bull. London Math. Soc.
  \textbf{15} (1983), no.~5, 499--506. 

\bibitem{Atlas}
J.H. Conway, R.T. Curtis, S.P. Norton, R.A. Parker, and R.A. Wilson, \emph{Atlas of finite groups}, Clarendon Press, Oxford, 1985.

\bibitem{DGLP}
A. Devillers, M. Giudici, C.H.  Li, and C.E. Praeger,
Locally {$s$}-distance transitive graphs, accepted for publication in J. Graph Theory.

\bibitem{Dixon96}
J.D. Dixon and B. Mortimer, \emph{Permutation groups}, Graduate Texts in
  Mathematics, vol. 163, Springer-Verlag, New York, 1996. 

\bibitem{GLP}
M. Giudici, C.H. Li, and C.E. Praeger, \emph{Analysing finite
  locally {$s$}-arc transitive graphs}, Trans. Amer. Math. Soc. \textbf{356}
  (2004), no.~1, 291--317. 


\bibitem{Higman67}
D.G. Higman, \emph{Intersection matrices for finite permutation groups}, J.
  Algebra \textbf{6} (1967), 22--42. 

\bibitem{HoSi}
A.J. Hoffman and R.R. Singleton, \emph{On {M}oore graphs with diameters {$2$}
  and {$3$}}, IBM J. Res. Develop. \textbf{4} (1960), 497--504. 

\bibitem{Ivanov87}
A.A. Ivanov, \emph{On {$2$}-transitive graphs of girth {$5$}}, European J.
  Combin. \textbf{8} (1987), no.~4, 393--420. 

\bibitem{Praeger87}
C.E. Praeger, J. Saxl, and K. Yokoyama, \emph{Distance transitive
  graphs and finite simple groups}, Proc. London Math. Soc. (3) \textbf{55}
  (1987), no.~1, 1--21. 

\bibitem{Tutte47}
W.T. Tutte, \emph{A family of cubical graphs}, Proc. Cambridge Philos. Soc.
  \textbf{43} (1947), 459--474. 

\bibitem{Tutte59}
W.T. Tutte, \emph{On the symmetry of cubic graphs}, Canad. J. Math. \textbf{11}
  (1959), 621--624. 

\bibitem{van07}
J. van Bon, \emph{Finite primitive distance-transitive graphs}, European J.
  Combin. \textbf{28} (2007), no.~2, 517--532. 

\bibitem{Weiss1981}
R. Weiss, \emph{The nonexistence of {$8$}-transitive graphs},
  Combinatorica \textbf{1} (1981), no.~3, 309--311. 

\end{thebibliography}



\end{document}